\newtheorem*{maintheorem*}{Main Theorem}
\newtheorem{theorem}{Theorem}[section]
\newtheorem{prop}[theorem]{Proposition}
\newtheorem{lemma}[theorem]{Lemma}
\newtheorem{cor}[theorem]{Corollary}
\theoremstyle{definition}
\newtheorem{definition}[theorem]{Definition}
\newtheorem{example}[theorem]{Example}
\numberwithin{equation}{section}
\newcommand{\nn}{\mathbb{N}}
\newcommand{\pp}{\mathbb{P}}
\newcommand{\qq}{\mathbb{Q}}
\newcommand{\rr}{\mathbb{R}}
\newcommand{\uu}{\mathcal{U}}
\newcommand{\zz}{\mathbb{Z}}
\newcommand{\gp}{\text{gp}}
\providecommand\ldb{\llbracket}
\providecommand\rdb{\rrbracket}
\keywords{abstract divisibility, maximal common divisor, MCD monoid, $k$-MCD monoid, Puiseux monoid}
\subjclass[2020]{Primary: 13F15, 20M25; Secondary: 13A05, 13G05}
\begin{document}
	\mbox{}
	\title{On maximal common divisors in Puiseux monoids}
	
	\author{Evin Liang}
	\address{Lexington School\\Lexington, MA 02421}
	\email{ez1liang@gmail.com}
	
	\author{Alexander Wang}
	\address{Millburn School\\Millburn, NJ 07041}
	\email{abwang07@gmail.com}
	
	\author{Lerchen Zhong}
	\address{Westwood School\\Austin, TX 78750}
	\email{lerchenzhong@gmail.com}
	
	
	\begin{abstract}
		Let $M$ be a commutative monoid. An element $d \in M$ is called a maximal common divisor of a nonempty subset $S$ of $M$ if $d$ is a common divisor of $S$ in $M$ and the only common divisors in $M$ of the set $\big\{ \frac{s}d : s \in S \big\}$ are the units of $M$. In this paper, we investigate the existence of maximal common divisors in rank-$1$ torsion-free commutative monoids, also known as Puiseux monoids. We also establish some connections between the existence of maximal common divisors and both atomicity and the ascending chain condition on principal ideals for the monoids we investigate here.
	\end{abstract}

\maketitle

\section{Introduction}
\label{sec:intro}

Let $M$ be a commutative monoid (i.e., a commutative semigroup with an identity element). Maximal common divisors can be defined in the abstract setting of commutative monoids as follows: a common divisor $d \in M$ of a nonempty subset $S$ of $M$ is called a maximal common divisor of $S$ provided that the only common divisors of the set $\big\{ \frac{s}d : s \in S \big\}$ are the units (i.e., invertible elements) of~$M$. In this paper, we study the existence of maximal common divisors in the setting of commutative monoids, focusing on the class consisting of all rank-$1$ torsion-free monoids (that are not nontrivial groups), also known as Puiseux monoids. In this class of monoids, we investigate the existence of maximal common divisors in connection to both atomicity and the ascending chain condition on principal ideals.
\smallskip

Following Cohn~\cite{pC68}, we say that the monoid $M$ is atomic if every nonunit of $M$ factors into finitely many atoms (i.e., irreducible elements). In addition, the monoid~$M$ is said to satisfy the ascending chain condition on principal ideals (ACCP) provided that every ascending chain of principal ideals eventually becomes constant. An integral domain (i.e., a commutative ring with identity and no nonzero zero-divisors) is atomic (resp., satisfies the ACCP) if its multiplicative monoid of nonzero elements is atomic (resp., satisfies the ACCP). It is well known that every monoid (or integral domain) that satisfies the ACCP is atomic. The distinction from atomicity and the ACCP is subtle and challenging to distinguish in the context of integral domains. The first atomic integral domain not satisfying the ACCP was constructed by Grams~\cite{aG74}, correcting the assertion made by Cohn in~\cite{pC68} that every atomic integral domain satisfies the ACCP. Since then, various alternative constructions of atomic integral domains not satisfying the ACCP have been provided in the literature, each of them revealing a novel nontrivial approach: for instance, see the constructions in~\cite{mR93,aZ82} and the more recent constructions in~\cite{BC19,BGLZ24,GL23,GL23a,GL22}. Atomic non-commutative rings not satisfying the ACCP have been recently constructed in~\cite{BBNS23}.
\smallskip

The construction of an atomic integral domain not satisfying the ACCP given by Roitman in~\cite{mR93} provided a negative answer to the ascent of atomicity from integral domains to their corresponding polynomial extensions, which was an open question posed in~\cite{AAZ90} by Anderson, Anderson, and Zafrullah. Roitman's construction is based on the notion of an MCD domain. A commutative monoid (resp., an integral domain) is called an MCD monoid (resp., an MCD domain) if every nonempty finite subset of the same has a maximal common divisor. Under the existence of maximal common divisors, atomicity does ascend to polynomial extensions (see \cite[Proposition~1.1]{mR93}). In the same paper, Roitman introduces the notion of $k$-MCD (for every positive integer~$k$), which means that every subset of cardinality $k$ has a maximal common divisor. In this paper we consider the strong MCD property, which is a natural (stronger) version of MCD property: we say that a commutative monoid is strongly MCD if every nonempty subset has a maximal common divisor. Both the MCD and strong MCD properties play a relevant role in this paper.
\smallskip

In Section~\ref{sec:background} we introduce most of the notation and terminology we shall be using later. We also provide a quick revision of the background needed to fully understand the results we establish here.
\smallskip

In Section~\ref{sec:two monoids} we explore the connection between the ACCP and the strong MCD property. It is known, although we were unable to find a reference in the literature, that every commutative monoid that satisfies the ACCP is an MCD monoid. We start Section~\ref{sec:two monoids} by arguing the stronger result that every commutative monoid that satisfies the ACCP is strongly MCD. Then we prove that in the class of countable monoids, which includes all Puiseux monoids, the ACCP and the strong MCD property are equivalent. We finish the same section discussing two examples of atomic Puiseux monoids that have appeared in recent literature of factorization theory: the so-called Grams' monoid and the Puiseux monoid generated by the terms of the sequence $\big( \frac{1}{p_n p_{n+2}} \big)_{n \ge 1}$, where $p_n$ is the $n$-th prime. As we will argue later, these two monoids (which motivate the results we establish in Sections~\ref{sec:generalization of Grams' monoid} and~\ref{sec:2-prime reciprocal monoids}) are examples of countable monoids that are MCD but not strongly MCD.
\smallskip

The essential component in Grams' classical and first-known construction of an atomic integral domain not satisfying the ACCP is Grams' monoid, which is an atomic Puiseux monoid not satisfying the ACCP as it contains the valuation monoid $\nn_0[\frac12]$ as a submonoid. Motivated by Grams' monoid, in Section~\ref{sec:generalization of Grams' monoid} we introduce and study some monoid extensions $N \subseteq M$, where both $N$ and $M$ are Puiseux monoids and $M$ generalizes Grams' monoid. We first prove that, as in the case of Grams' monoid, the monoid $M$ is atomic, and if $N$ is a valuation monoid then $M$ does not satisfy the ACCP. We also prove that $M$ is an MCD monoid provided that $N$ is a valuation monoid and, as $M$ is countable, it follows that when $N$ is a valuation monoid, the monoid $M$ is an MCD that is not strongly MCD. Following Eftekhari and Khorsandi~\cite{EK18}, we say that a commutative monoid is MCD-finite if every nonempty finite subset has only finitely many maximal common divisors, up to associates. We conclude Section~\ref{sec:generalization of Grams' monoid} proving that, for the studied monoid extensions $N \subseteq M$, the monoid $M$ is MCD-finite if and only if its submonoid $N$ is MCD-finite.
\smallskip

As we do with Grams' monoid in Section~\ref{sec:generalization of Grams' monoid}, in Section~\ref{sec:2-prime reciprocal monoids} we study a class of atomic Puiseux monoids that we call $2$-prime reciprocal. Monoids in this class generalize the monoid $M_\pp$ generated by the sequence $\big( \frac{1}{p_n p_{n+2}} \big)_{n \ge 1}$. A larger class of Puiseux monoids (not all of them atomic) was previously investigated in~\cite{AGH21} under the term `weak reciprocal Puiseux monoids' (this explains why we chose the term `$2$-prime reciprocal' for the monoids investigated in Section~\ref{sec:2-prime reciprocal monoids}). After quickly showing that $2$-prime reciprocal Puiseux monoids are atomic but do not satisfy the ACCP, we prove the main result of Section~\ref{sec:2-prime reciprocal monoids}: every $2$-prime reciprocal Puiseux monoid is an MCD monoid. Gotti and Li already proved in~\cite{GL23a} that $M_\pp$ is strongly atomic (that is, atomic and $2$-MCD). Thus, as $2$-prime reciprocal Puiseux monoids are countable, we obtain another class of atomic MCD monoids that are not strongly MCD.
\smallskip

In Section~\ref{sec:atomic not strongly atomic} we construct an atomic monoid that is not $2$-MCD, and so it is not an MCD monoid as the rest of the atomic monoids we investigate in previous sections. The search for atomic monoids that are not MCD was initiated by Roitman in~\cite{mR93}, where he constructed, for each $k \in \nn$, an atomic integral domain that is $k$-MCD but not $(k+1)$-MCD. Restricted to the class of finite-rank monoids, which is the one concerning this paper, the first atomic monoid that is not $2$-MCD (and so is not an MCD) first appeared in~\cite[Example~4.3]{GV23}; it is a monoid with rank-$3$. In the same paper, the authors left the open question of whether there exists an atomic Puiseux monoid that is not $2$-MCD, and the same question was positively answered by Gonzalez et al.~\cite{GLRRT24}. More recently, Gotti and Rabinovitz generalized this by constructing, for each $k \in \nn$, an atomic Puiseux monoid that is $k$-MCD but not $(k+1)$-MCD, and they did so using a refined version of the atomization technique introduced in~\cite{GL23} (see \cite{GR23}). Although the atomic non-$2$-MCD monoid we construct in Section~\ref{sec:atomic not strongly atomic} has rank~$2$ (and so is not a Puiseux monoid), it is worth emphasizing that our construction not only seems to be simpler but is based on a new idea that may have further applications.

\bigskip
\section{Background}
\label{sec:background}

\smallskip
\subsection{General Notation}

Let us first introduce some general notation we shall be using throughout this paper. We let $\nn$ and $\nn_0$ denote the set of positive integers and the set of nonnegative integers, respectively. In addition, we let $\mathbb{P}$ denote the set of primes. As it is customary, we let $\mathbb{Q}$ and $\rr$ stand for the set of rational numbers and the set of real numbers, respectively. For a subset $S$ of $\rr$ and $r \in \rr$, we set $S_{\ge r} := \{s \in S : s \ge r\}$ and $S_{> r} := \{s \in S : s > r\}$. For $b,c  \in \mathbb Z$, we set
\[
	\llbracket b,c \rrbracket := \{n \in \zz : b \le n \le c\},
\]
allowing the discrete interval $\ldb b,c \rdb$ to be empty when $b > c$. For a nonzero $q \in \qq$, we let $\mathsf{n}(q)$ and $\mathsf{d}(q)$ denote, respectively, the unique $n \in \zz$ and $d \in \nn$ such that $q = n/d$ and $\gcd(n,d) = 1$. For $p \in \pp$ and a nonzero $n \in \zz$, we let $v_p(n)$ denote the maximum $m \in \nn_0$ such that $p^m \mid n$. Then for each $p \in \pp$, we let $v_p \colon \qq \setminus \{0\} \to \zz$ denote the $p$-adic valuation map, which means that for every nonzero $q \in \qq$
\[
	v_p(q) = v_p(\mathsf{n}(q)) - v_p(\mathsf{d}(q)).
\]

\medskip
\subsection{Commutative Monoids}

Let us introduce now some terminology about commutative monoids. Throughout this paper we tacitly assume that every monoid is commutative and cancellative. Let~$M$ be a monoid that is additively written (and with identity element denoted by~$0$). We say that $M$ is \emph{nontrivial} if $M \neq \{0\}$. The group consisting of all the invertible elements of $M$ (also called units) will be denoted by $\uu(M)$, and we say that $M$ is \emph{reduced} if $\uu(M)$ is the trivial group. A subset~$N$ of~$M$ is called a \emph{submonoid} if $N$ contains~$0$ and is closed under the operation of~$M$. The arbitrary intersection of submonoids of $M$ is also a submonoid of $M$. If $S$ is a subset of $M$, then we let $\langle S \rangle$ denote the submonoid of~$M$ generated by $S$; that is, $\langle S \rangle$ is the intersection of all the submonoids of~$M$ containing~$S$. The \emph{Grothendieck group} of $M$, denoted by $\gp(M)$, is the abelian group consisting of all the formal differences of elements of $M$. As $M$ is assumed to be cancellative, we can identify $M$ with a submonoid of $\gp(M)$ (indeed, it turns out that $\gp(M)$ is the unique abelian group up to isomorphism that minimally contains an isomorphic copy of~$M$). The monoid~$M$ is called \emph{torsion-free} provided that $\gp(M)$ is a torsion-free abelian group or, equivalently, for each $n \in \nn$ and $b,c \in M$, the equality $nb = nc$ implies that $b=c$. 
\smallskip

A non-invertible element $a \in M$ is called an \emph{atom} (or \emph{irreducible}) if for all $b,c \in M$ the equality $a = b+c$ implies that either $b \in \uu(M)$ or $b \in \uu(M)$. The set consisting of all the atoms of~$M$ is denoted by $\mathcal{A}(M)$. Following Cohn~\cite{pC68}, we say that the monoid $M$ is \emph{atomic} if every element in $M \setminus \uu(M)$ can be written as a sum of finitely many atoms (allowing repetitions). The monoid $M$ is called a \emph{unique factorization monoid} (UFM) if it is atomic and every non-invertible element of $M$ can be written as a sum of finitely many atoms in a unique way (up to order and associates). For progress on atomicity in the setting of integral domains, see the recent survey~\cite{CG24} by Coykendall and Gotti. Ascending chains of principal ideals are often studied in connection to atomicity. A subset $I$ of $M$ is called an \emph{ideal} if
\[
	I+M := \{b+c : b \in I \text{ and } c \in M\} \subseteq I.
\]
An ideal of $M$ is called \emph{principal} if it has the form $b + M$ for some $b \in M$. The monoid $M$ is called a \emph{valuation monoid} if for all $b,c \in M$ either $b + M \subseteq c + M$ or $c + M \subseteq b + M$. A sequence $(I_n)_{n \ge 1}$ of subsets of $M$ is called \emph{ascending} if $I_n \subseteq I_{n+1}$ for all $n \in \nn$, while a sequence $(J_n)_{n \ge 1}$ of subsets of $M$ is said to \emph{stabilize} if we can take $N \in \nn$ such that $J_n = J_N$ for all $n \in \nn $ with $n \ge N$. Then we say that $M$ satisfies the \emph{ascending chain condition on principal ideals} (ACCP) if every ascending chain of principal ideals eventually stabilizes. Every monoid satisfying the ACCP is atomic \cite[Proposition~1.1.4]{GH06b}, but the converse is not true in general. Some examples of atomic monoids that do not satisfy the ACCP, along with recent progress on the connection between the ACCP and atomicity, are provided in~\cite{GL23}. Also, we will discuss further examples throughout this paper. Finally, nontrivial elementary examples of monoids satisfying the ACCP are the additive submonoids of $\rr_{\ge 0}$ whose corresponding sets of nonzero elements are bounded below by a positive real number \cite[Proposition~4.5]{fG19}.
\smallskip

Additive submonoids of $\qq_{\ge 0}$ are the most relevant algebraic objects in this paper; they are called \emph{Puiseux monoids}. The atomicity and arithmetic of factorizations of Puiseux monoids have been systematically investigated during the past decade (\cite{CGG20a,GR23} provides a friendly survey in this direction). The \emph{rank} of the monoid $M$ is, by definition, the rank of its Grothendieck group or, equivalently, the dimension of the vector space $\qq \otimes_\zz \gp(M)$ over the field $\qq$ obtained by tensoring the $\zz$-modules $\qq$ and $\gp(M)$. Observe that every nontrivial Puiseux monoid has rank~$1$. It is known that nontrivial Puiseux monoids account, up to isomorphism, for all rank-$1$ torsion-free monoids that are not abelian groups \cite[Theorem~3.12]{GGT21} (see also \cite[Section~24]{lF70} and \cite[Theorem~2.9]{rG84}). Finally, observe that every finitely generated Puiseux monoid is an additive submonoid of $\nn_0$ (or a numerical monoid) up to isomorphism, and so it must satisfy the ACCP. More generally, every additive submonoid of $\rr_{\ge 0}$ (and so every Puiseux monoid)
\smallskip

\medskip
\subsection{Divisibility}

Even though it is more natural to write monoids multiplicatively when dealing with notions related to divisibility (as we did in the introduction), we will keep writing the monoid $M$ additively. The reason for this is that the monoids we are primarily interested in the scope of this paper are Puiseux monoids, which are additive by nature.
\smallskip

For $b,c \in M$, we say that $c$ \emph{divides} $b$ and write $c \mid_M b$ provided that $b = c+d$ for some $d \in M$. For a nonempty subset $S$ of $M$, an element $d$ is called a \emph{common divisor} of $S$ if $d \mid_M s$ for all $s \in S$, in which case, we let $S-d$ denote the set $\{s-d : s \in S\}$. A \emph{greatest common divisor} (GCD) of a nonempty subset $S$ of $M$ is a common divisor $d \in M$ of $S$ such that any other common divisor of $S$ divides~$d$ in $M$. A \emph{maximal common divisor} (MCD) of a nonempty subset $S$ of $M$ is a common divisor $d \in M$ of $S$ such that the only common divisors of $S-d$ are the invertible elements of~$M$. In addition, the monoid $M$ is called a \emph{GCD monoid} (resp., an \emph{MCD monoid}) provided that every nonempty finite subset of~$M$ has a GCD (resp., an MCD). It is clear that every greatest common divisor is a maximal common divisor, and so every GCD monoid is an MCD monoid. For $k \in \nn$, we say that $M$ is a $k$-\emph{MCD monoid} if every subset of $M$ of cardinality $k$ has a maximal common divisor. Clearly, every monoid is a $1$-MCD monoid. Also, observe that $M$ is an MCD monoid if and only if $M$ is a $k$-MCD monoid for every $k \in \nn$. The notion of $k$-MCD was introduced by Roitman in~\cite{mR93}. Finally, we say that~$M$ is an \emph{MCD-finite} monoid if every nonempty finite subset of $M$ has only finitely many maximal common divisors up to associates. The notions of an MCD, $k$-MCD (for every $k \in \nn$), and MCD-finite are fundamental in the scope of this paper.
\smallskip

We conclude this section by introducing a property in the class of commutative (but not necessarily cancellative) monoids, which is a natural strengthening of the MCD property.
\begin{definition}
	Let $M$ be a commutative monoid. We say that $M$ is \emph{strongly MCD} or a \emph{strong MCD monoid} if every nonempty subset of $M$ has a maximal common divisor.
\end{definition}

It follows directly from the definition that every strong MCD monoid is an MCD monoid.
As we are about to prove in Section~\ref{sec:two monoids}, every monoid that satisfies the ACCP is strongly MCD, and both properties are equivalent for countable monoids (and so for Puiseux monoids).

\bigskip
\section{The Strong MCD Property and the ACCP}
\label{sec:two monoids}

In this section, we consider the strong MCD property. As the following proposition indicates, every monoid satisfying the ACCP is a strong MCD monoid.

\begin{prop} \label{prop:ACCP implies strong MCD}
	Let $M$ be a commutative monoid. If $M$ satisfies the ACCP, then $M$ is a strong MCD monoid.
\end{prop}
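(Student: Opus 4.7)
The plan is to prove the contrapositive by recursively building a strictly ascending chain of principal ideals whenever a nonempty subset $S \subseteq M$ fails to admit a maximal common divisor. This immediately contradicts the ACCP hypothesis, so every nonempty $S$ must have an MCD.

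For the construction, I would start with $d_0 := 0$, which is a common divisor of every nonempty $S$ since $s = 0 + s$ for all $s \in S$. Now suppose I have constructed a common divisor $d_n$ of $S$ and that $d_n$ is not an MCD of $S$. By the definition of MCD, this means there exists a common divisor $e_{n+1} \in M$ of the set $S - d_n$ which is not a unit of $M$. Set $d_{n+1} := d_n + e_{n+1}$. For each $s \in S$, writing $s - d_n = e_{n+1} + r_s$ for some $r_s \in M$ gives $s = d_{n+1} + r_s$, so $d_{n+1}$ is again a common divisor of $S$. Using dependent choice, either this process terminates at some $d_n$ that is an MCD of $S$ (and we are done), or it produces an infinite sequence $(d_n)_{n \ge 0}$ of common divisors of $S$.

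The key verification is that the chain $d_0 + M \subseteq d_1 + M \subseteq \cdots$ is strictly ascending. The inclusion $d_{n+1} + M \subseteq d_n + M$ is immediate from $d_{n+1} = d_n + e_{n+1}$, which reverses under the containment order because we are writing monoids additively (i.e., $d_n \mid_M d_{n+1}$ translates to $d_{n+1} + M \subseteq d_n + M$). For strictness, if one had $d_n + M = d_{n+1} + M$, then $d_n \in d_{n+1} + M$, so $d_n = d_{n+1} + f = d_n + e_{n+1} + f$ for some $f \in M$; cancelling $d_n$ (recall that $M$ is assumed cancellative throughout the paper) yields $e_{n+1} + f = 0$, forcing $e_{n+1} \in \uu(M)$ and contradicting the choice of $e_{n+1}$ as a non-unit. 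Thus the chain is strictly ascending, which contradicts the ACCP. Therefore the recursive process must terminate, and $S$ has an MCD.

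The only real obstacle is the direction-of-divisibility bookkeeping (since additively written monoids invert the usual multiplicative intuition about principal ideals) together with the essential use of cancellativity to force strictness of the chain; everything else is a straightforward induction combined with the ACCP.
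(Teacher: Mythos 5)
Your construction of the sequence of common divisors is exactly the paper's (iteratively enlarging a non-maximal common divisor $d_n$ of $S$ by a non-unit common divisor $e_{n+1}$ of $S - d_n$), but the final step --- where you claim a contradiction with the ACCP --- contains a genuine error. You assert that $d_0 + M \subseteq d_1 + M \subseteq \cdots$ is ascending, yet in the very next sentence you correctly derive $d_{n+1} + M \subseteq d_n + M$ from $d_{n+1} = d_n + e_{n+1}$. These two statements point in opposite directions: what your argument actually produces is the strictly \emph{descending} chain $d_0 + M \supsetneq d_1 + M \supsetneq \cdots$, and the ACCP says nothing about descending chains. For instance, $\mathbb{N}_0$ satisfies the ACCP but admits the strictly descending chain $(n + \mathbb{N}_0)_{n \ge 0}$. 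So no contradiction has been reached.

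The missing idea, which is precisely how the paper closes the argument, is to pass from the divisors to their cofactors. Fix any $s_0 \in S$. Since each $d_n$ is a common divisor of $S$, the element $s_0 - d_n$ lies in $M$, and from $s_0 - d_n = e_{n+1} + (s_0 - d_{n+1})$ you get $(s_0 - d_n) + M \subseteq (s_0 - d_{n+1}) + M$, so the chain $\bigl((s_0 - d_n) + M\bigr)_{n \ge 0}$ \emph{is} ascending. Your cancellativity argument then applies verbatim to show strictness: equality of consecutive ideals would force $e_{n+1} \in \uu(M)$. This non-stabilizing ascending chain is what contradicts the ACCP. With that single correction your proof coincides with the paper's.
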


\begin{proof}
	Suppose that the monoid $M$ satisfies the ACCP. Now assume for the sake of contradiction that $M$ is not strongly MCD, and take a nonempty $S \subseteq M$ such that $S$ has no maximal common divisor. Then there is a common divisor $d_1$ of $S$ that is not invertible. Now observe that if the set $S_1 := S - d_1$ had a maximal common divisor $d$, then $d_1 + d$ would be a maximal common divisor of $S$, which is not possible. Thus, the set $S_1$ does not have a maximal common divisor in $M$. Thus there exists a non-invertible $d_2 \in M$ that is a common divisor of $S_1$, and in the same way we obtain that $S_2 := S_1 - d_2 = S - (d_1 + d_2)$ is a subset of $M$ that does not have any maximal common divisor. Continuing in this fashion, we can obtain a sequence of non-invertible elements $(d_n)_{n \ge 1}$ such that $d_1 + \dots + d_n$ is a common divisor of $S$ for every $n \in \nn$, and so after fixing $s_0 \in S$ and setting $s_n := d_1 + \dots + d_n$ for every $n \in \nn$, we obtain that $(s_0 - s_n + M)_{n \ge 1}$ is an ascending chain of principal ideals that does not stabilize: indeed, $(s - s_n) - (s - s_{n+1}) = d_{n+1}$ is a non-invertible element of $M$ for every $n \in \nn$.
\end{proof}

The converse of Proposition~\ref{prop:ACCP implies strong MCD} does not hold in general. The next example sheds some light upon this observation.

\begin{example}
	The additive monoid $\rr_{\ge 0}$ is strongly MCD but does not satisfy the ACCP. The sequence $\left(\frac1{2^n}+\rr_{\ge 0}\right)_{n \ge 1}$ is an ascending chain of principal ideals of $\rr_{\ge 0}$ that does not stabilize, so $\rr_{\ge 0}$ does not satisfy the ACCP. For any subset $S\subseteq\rr_{\ge 0}$, the completeness property of the reals implies there exists a nonnegative real $x=\inf S$ such that $x$ is the largest real that is less than or equal to every element of $S$. Then, $x$ is an MCD of $S$ because if $x+d$ divides every element of $S$, then $\inf S\geq x+d$, which is a contradiction. This implies $\rr_{\ge 0}$ is strongly MCD.
\end{example}

However, the converse of Proposition~\ref{prop:ACCP implies strong MCD} does hold for countable commutative monoids.

\begin{prop} \label{prop:an ACCP countable monoid is strong MCD}
	Let $M$ be a countable commutative monoid. If $M$ is strongly MCD, then $M$ must satisfy the ACCP. 
\end{prop}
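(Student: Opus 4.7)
The plan is to prove the contrapositive: if $M$ is a countable commutative monoid that does not satisfy the ACCP, then $M$ is not strongly MCD. Enumerating $M = \{m_k : k \in \nn\}$ and fixing a non-stabilizing ascending chain of principal ideals, we obtain $a_n \in M$ with $a_n = a_{n+1} + b_{n+1}$ and $b_{n+1} \in M \setminus \uu(M)$ for every $n \ge 1$. Setting $c_n := b_2 + \cdots + b_n$ for $n \ge 2$ and $c_1 := 0$, the sequence $(c_n)$ satisfies $c_n \mid c_{n+1}$ strictly (since $c_{n+1} = c_n + b_{n+1}$ and $b_{n+1}$ is non-unit) and $c_n \mid a_1$ for every $n$ (since $a_1 = a_n + c_n$), so $a_1 + M \subseteq \bigcap_n (c_n + M)$ and every $c_n$ is a common divisor of any $S \subseteq \bigcap_n (c_n + M)$.

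The strategy is to construct $S$ recursively inside $\bigcap_n (c_n + M)$ so that every common divisor of $S$ divides some $c_n$; this suffices, because if $c$ is a common divisor of $S$ and $c \mid c_n$, then $c_n \mid c_{n+1}$ strictly provides a proper non-unit extension of $c$ to another common divisor of $S$, so $c$ is never maximal. At stage $k$ we address $m_k$. If $m_k \mid c_n$ for some $n$, then writing $c_n = m_k + r$ gives $c_{n+1} = m_k + (r + b_{n+1})$, and $r + b_{n+1}$ is a non-unit (by the standard fact that any sum involving a non-unit remains a non-unit in a cancellative commutative monoid), so $m_k$ has the required non-unit extension to the common divisor $c_{n+1}$. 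Otherwise, a witness $t_k$ is added to $S$, chosen from $\bigcap_n (c_n + M) \setminus (m_k + M)$, so that $m_k$ fails to be a common divisor of $S$.

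The main obstacle is guaranteeing the existence of $t_k$ at every stage of the second case, equivalently, that $\bigcap_n (c_n + M) \not\subseteq m_k + M$. When this inclusion does hold for the fixed chain and some $m_k$, the construction must be made adaptive by refining the chain $(c_n)$: one inserts a new element (for instance, a common multiple of $c_n$ and $m_k$, built from the non-ACCP assumption applied within the principal ideal $(c_n + m_k) + M$) into the chain, forcing $m_k$ to divide some future chain member and rerouting it into the first case. This refinement must be coordinated with the already-chosen elements of $S$ so that each new chain element still divides them; the countability of $M$ is essential here, ensuring that only countably many refinements are needed and that the interleaved recursion terminates with a well-defined $S$ whose common divisors all lie below some $c_n$ and are therefore non-maximal. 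Consequently $S$ is a subset of $M$ with no MCD, establishing the contrapositive.
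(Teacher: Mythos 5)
Your overall plan---prove the contrapositive, enumerate $M$, and arrange that each $m_k$ is either not a common divisor of the bad set or a non-maximal one---is the same diagonalization strategy as the paper's, and your first case is handled correctly: if $m_k \mid_M c_n$ for some $n$, then $c_{n+1}=m_k+(r+b_{n+1})$ with $r+b_{n+1}$ a non-unit, so $m_k$ can never be a maximal common divisor of a set contained in $\bigcap_n(c_n+M)$. The problem is the case you yourself flag as the ``main obstacle'': $m_k$ divides no $c_n$, yet $\bigcap_n(c_n+M)\subseteq m_k+M$, so no witness $t_k$ exists. This situation genuinely occurs for your specific choice of chain. In $M=\big\langle \frac1{2^n}:n\in\nn\big\rangle$ with $a_n=\frac1{2^{n-1}}$ one gets $c_n=1-\frac1{2^{n-1}}$, and then $\bigcap_n(c_n+M)=1+M$ while $1$ divides no $c_n$; so $m_k=1$ is stuck in the bad sub-case. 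Your proposed repair is not a proof: a ``common multiple of $c_n$ and $m_k$'' that additionally divides every element already placed in $S$ need not exist (monoids need not admit least common multiples, and $c_n+m_k$ has no reason to divide $a_1$ or the chosen $t_j$'s); the refined chain must still have non-unit successive quotients and a nonempty, witness-rich intersection after infinitely many interleaved refinements; and nothing guarantees that elements of $S$ committed at earlier stages remain divisible by chain members inserted later. These are exactly the difficulties the theorem has to overcome, and they are asserted rather than resolved.

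The paper's proof avoids this sub-case entirely by not taking the bad set inside the intersection of a fixed ascending chain of principal ideals. Instead it builds $T=\{e_k: k\in\nn\}$ with $e_k=\sum_{i\in\sigma,\,i<k}b_i+a_k$ for an adaptively constructed index set $\sigma$; since $e_k-e_{k+1}$ is either $0$ or $b_k$, the elements of $T$ form a \emph{descending} divisibility chain, and at stage $i+1$ the dichotomy is exhaustive by design: if $c_{i+1}$ (in the paper's notation, the $(i+1)$-st element of the enumeration) divides every $\sum_{j\in\sigma_i}b_j+a_k$, then adjoining one new index to $\sigma$ manufactures the strictly larger common divisor $c_{i+1}+b_{\ell_i+1}$ of $T$, and if it fails to divide one of them, that offending element is simply decreed to belong to $T$. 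No witness from a pre-fixed intersection is ever required. To salvage your approach you would need either to prove that a non-stabilizing chain can always be chosen so that the obstruction never occurs, or to carry out the chain-refinement bookkeeping in full; as written, the argument has a hole at its central step.
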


\begin{proof}
	Let $M$ be a monoid that does not satisfy the ACCP. We aim to show that $M$ is not strongly MCD. Let $(a_n + M)_{n \ge 1}$ be an ascending chain of principal ideals  of $M$ that does not stabilize. Define $b_n := a_n - a_{n+1}$ for all positive integers $n$. As the chain $(a_n + M)_{n \ge 1}$ does not stabilize, we can assume, without loss of generality, that $b_n$ is never invertible. Let $(c_n)_{n \ge 1}$ be a sequence enumerating the elements of $M$. Suppose for simplicity that $c_1 = 0$.
	
	We define a set $S$ of positive integers with the property that the (infinite) set
	\[
		T: = \left\{ \sum_{i \in S, \, i<k} b_i + a_k : k \in \mathbb N \right\}
	\]
	has no maximal common divisor in $M$. Let $e_k=\sum_{i \in S, \, i<k} b_i + a_k$ for all $k\in\mathbb N$. Note that $e_{k}-e_{k+1}$ is either $ (a_k-a_{k+1})- b_k = 0$ or $a_k-a_{k+1}=b_k$, and hence $e_{k+1}$ divides $e_k$ for all $k \in \mathbb{N}$. We construct $S$ in stages by defining sets $S_i$ and an increasing sequence of positive integers $\ell_i$ for each positive integer $i$ with the property that $S_i$ and $S_{i+1}$ contain the same integers at most $\ell_i$, and every element of $S_i$ is at most $\ell_i$. Then $S=\bigcup_{i\in\mathbb N}S_i$. Define $S_i$ and $\ell_i$ inductively as follows.
	\begin{itemize}
		\item $S_1=\{1\}$ and $\ell_1=1$. Then $c_1$ cannot be a maximal common divisor of $T$ because $c_1+b_1$ must be a common divisor of $T$.
		\smallskip
		
		\item If $c_{i+1}$ divides every element of the form $\sum_{j\in S_i}b_j+a_k$ for positive integers $k$, then we set $S_{i+1}=S_i\cup\{\ell_i+1\}$ and $\ell_{i+1}=\ell_i+1$. Then $c_{i+1}+b_{\ell_i+1}$ divides $\sum_{j\in S_{i+1}}b_j+a_k$ for all positive integers $k$. Since
        \[
            \sum_{j\in S_{i+1}}b_j+a_k=\sum_{j\in S,i\le\ell_{i+1}}b_j+a_k
        \]
        divides $e_k$ for any $k>\ell_{i+1}$, and $e_y$ divides $e_x$ for any $y>x$, it follows that $c_{i+1}+b_{\ell_i+1}$ is a common divisor of $T$. So $c_{i+1}$ cannot be a maximal common divisor of $T$.
		\smallskip
		
		\item If $c_{i+1}$ does not divide every element of the form $\sum_{j\in S_i}b_j+a_k$ for positive integers $k$, then we let $m$ denote the least positive integer greater than $\ell_i$ for which $c_{i+1}$ does not divide $\sum_{j\in S_i}b_j+a_{m+1}$. Then set $S_{i+1}=S_i$ and $\ell_{i+1}=m$. Then
  \[\sum_{j\in S_i}b_j+a_{m+1}=\sum_{j\in S_{i+1}}b_j+a_{\ell_{i+1}+1}=\sum_{j\in S,j<\ell_{i+1}+1}b_j+a_{\ell_{i+1}+1}=e_{\ell_{i+1}+1},\] 
  where $c_{i+1}$ does not divide the leftmost expression and thus does not divide the rightmost expression either. But $e_{\ell_{i+1}+1}$ is an element of $T$, so $c_{i+1}$ cannot possibly be a common divisor of $T$.
	\end{itemize}
	
	Thus, the resulting set $T$ has no maximal common divisor in $M$, and so $M$ is not strongly MCD.
\end{proof}

Combining Propositions~\ref{prop:ACCP implies strong MCD} and~\ref{prop:an ACCP countable monoid is strong MCD}, we obtain the following corollary.

\begin{cor} \label{cor:ACCP equivalent to strong MCD for PMs}
	A Puiseux monoid satisfies the ACCP if and only if it is strongly MCD. 
\end{cor}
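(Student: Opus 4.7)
The plan is extremely short because the corollary is meant to be a packaging of the two preceding propositions specialized to the Puiseux setting. The only genuinely new observation needed is that every Puiseux monoid is countable, which is immediate since a Puiseux monoid is by definition a submonoid of $\qq_{\ge 0}$ and $\qq$ is countable.

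With countability in hand, I would argue both directions in a single short paragraph. For the forward implication, if a Puiseux monoid $M$ satisfies the ACCP, then Proposition~\ref{prop:ACCP implies strong MCD} (which applies to any commutative monoid with no countability hypothesis) yields that $M$ is strongly MCD. For the reverse implication, if $M$ is strongly MCD, then since $M \subseteq \qq_{\ge 0}$ is countable, Proposition~\ref{prop:an ACCP countable monoid is strong MCD} applies and forces $M$ to satisfy the ACCP.

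There is no real obstacle here; the only thing to watch is that Proposition~\ref{prop:an ACCP countable monoid is strong MCD} really does require countability (it relies on enumerating the elements of $M$ as $(c_n)_{n \ge 1}$ to perform the stagewise construction), and that Proposition~\ref{prop:ACCP implies strong MCD} does not. Once this is noted, the corollary is obtained by direct quotation of both results, and the proof can be written in one or two sentences. I would avoid introducing any additional Puiseux-specific machinery, since the content of the corollary lies entirely in the two propositions and the trivial fact that $|\qq| = \aleph_0$.
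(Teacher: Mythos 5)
Your proposal is correct and matches the paper exactly: the corollary is obtained by combining Proposition~\ref{prop:ACCP implies strong MCD} (no countability needed) with Proposition~\ref{prop:an ACCP countable monoid is strong MCD} (which requires countability), together with the observation that a Puiseux monoid, being a submonoid of $\qq_{\ge 0}$, is countable. Nothing further is needed.
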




We will conclude this section briefly discussing the two examples of Puiseux monoids that motivated Sections~\ref{thm:when Grams-like monoids are MCD} and~\ref{sec:2-prime reciprocal monoids}. They are not only the prototypical examples for the two classes of monoids we investigate in the next two sections, but they also provide simple negative answers for the following two natural questions.

\begin{enumerate}
    \item It follows directly from the corresponding definitions that every strong MCD monoid is an MCD monoid. It is natural to wonder whether the converse holds. The answer is negative: the Puiseux monoids discussed in Examples~\ref{ex:Grams monoid} and~\ref{ex:prototypical 2-prime reciprocal} are both MCD monoids that are not strongly MCD.
    \smallskip

    \item It is well known that every atomic GCD monoid is a UFM. It is natural to wonder if one can improve the same result by replacing the GCD condition by the weaker condition of being an MCD monoid. The answer is negative: the Puiseux monoids discussed in Examples~\ref{ex:Grams monoid} and~\ref{ex:prototypical 2-prime reciprocal} are both atomic MCD monoids that do not satisfy the ACCP and, therefore, they are not UFMs.
\end{enumerate}

We start with the monoid in Grams' construction of the first atomic integral domain not satisfying the ACCP.

\begin{example} \label{ex:Grams monoid}
	For each $n \in \nn$, let $p_n$ be the $n$-th odd prime, and consider the Puiseux monoid defined as follows:
	\[
		M := \Big\langle \frac1{2^{n-1} p_n} : n \in \nn \Big\rangle.
	\]
	It is well known and not difficult to verify that $\mathcal{A}(M) = \big\{ \frac1{2^{n-1} p_n} : n \in \nn \big\}$, and so that $M$ is atomic. Also, observe that $N := \zz[\frac12]_{\ge 0} = \big\langle \frac1{2^n} : n \in \nn \big\rangle$ is a submonoid of $M$ that is a valuation monoid. Therefore $\big(\frac1{2^n} + M \big)_{n \ge 1}$ is an ascending chain of principal ideals of $M$ that does not stabilize. As a result, $M$ is a rank-$1$ torsion-free atomic monoid that does not satisfy the ACCP. Therefore it follows from Corollary~\ref{cor:ACCP equivalent to strong MCD for PMs} that $M$ is not strongly MCD. Although $M$ is not strongly MCD (and does not satisfy the ACCP), it is indeed an MCD monoid: this will follow as a special case of a more general scenario we will consider in Proposition~\ref{prop:<1/(x_np_n)> is not ACCP}. The atomic structure and arithmetic of factorizations of classes of atomic Puiseux monoids generalizing $M$ have been investigated in the recent papers~\cite[Section~3]{GL23} and \cite[Section~4]{CJMM24}.
 
    Let us briefly explain the role played by the Puiseux monoid $M$ in Grams' construction of the first atomic integral domain not satisfying the ACCP. Let $F$ be a field, and consider the monoid algebra $F[M]$ consisting of all polynomial expressions with coefficients in $F$ and exponents in the monoid $M$ (with addition and multiplication defined as for standard polynomials). Now observe that
    \[
        S := \{f \in F[M] : f(0) \neq 0 \}
    \]
    is a multiplicative subset of the integral domain $F[M]$. Let $R$ be the localization of $F[M]$ at $S$. It was first proved by Grams~\cite{aG74} that $R$ is an atomic integral domain that does not satisfy the ACCP, disproving a wrong assertion made in~\cite{pC68} that in the setting of integral domains, being atomic and satisfying the ACCP were equivalent conditions.
    
    Finally, it is still open whether there exists a field $F$ such that the monoid algebra $F[M]$ of $M$ over $F$ is not atomic. In general, the property of being atomic does not ascend from finite-rank torsion-free monoids to their corresponding monoid algebras over fields (this was proved in~\cite{CG19} for any field of prime cardinality and, more recently, in~\cite{GR23} for any integral domain). In the same direction, it is also known that the property of being atomic does not ascend from integral domains to their corresponding polynomial rings, and this was proved by Roitman~\cite{mR93} back in 1993.
\end{example}

%
%
%

The second Puiseux monoid we proceed to discuss is the prototype motivating Section~\ref{sec:2-prime reciprocal monoids}.


\begin{example} \label{ex:prototypical 2-prime reciprocal}
    For each $n \in \nn$, we now let $p_n$ denote the $n$-th prime. Consider the Puiseux monoid defined as follows:
    \[
        M := \bigg\langle \frac{1}{p_n p_{n+2}} : n \in \nn \bigg\rangle.
    \]
    It is not difficult to argue that $M$ is an atomic monoid whose atoms are precisely the defining generators: $\mathcal{A}(M) = \big\{ \frac1{p_n p_{n+2}} : n \in \nn \big\}$. Also, as shown in \cite[Example 3.9]{GL23a}, the monoid $M$ does not satisfy the ACCP as the fact that
    \[
        \frac1{p_n} = \frac1{p_{n+2}} + (p_{n+2} - p_n) \frac1{p_n p_{n+2}}
    \]
    for every $n \in \nn$ implies that both
    \[
        \bigg( \frac1{p_{2n-1}} + M \bigg)_{n \ge 1} \quad \text{ and } \quad \bigg( \frac1{p_{2n}} + M \bigg)_{n \ge 1}
    \]
    are ascending chains of principal ideals of $M$ that do not stabilize. Because $M$ is a Puiseux monoid that does not satisfy the ACCP, it follows from Corollary~\ref{cor:ACCP equivalent to strong MCD for PMs} that $M$ is not strongly MCD. However, $M$ is an MCD monoid as a consequence of Theorem~\ref{thm:2-prime reciprocal MCD}, which we will state and prove later. Thus, similar to Grams' monoid, $M$ is an atomic MCD monoid that is not strongly MCD or, equivalently, that does not satisfy the ACCP. The atomic structure of the monoid $M$ has also been considered in recent literature (see, for instance, \cite[Section~3]{GL23a}).

    Finally, it is worth emphasizing that (as with Grams' monoid) whether the monoid algebra $F[M]$ is an atomic integral domain for any field $F$ is still an open problem.
\end{example}

\bigskip
\section{A Class of Grams-Like Puiseux Monoids}
\label{sec:generalization of Grams' monoid}

Motivated by Grams' monoid, we proceed to introduce a construction on a class of Puiseux monoids that respects the existence and uniqueness of maximal common divisors.

Let $(d_n)_{n \ge 1}$ be a strictly increasing sequence of positive integers, and let $(p_n)_{n \ge 1}$ be a sequence of pairwise distinct primes such that $p_n \nmid d_m$ for any $m,n \in \nn$. Now consider the following Puiseux monoids:
\begin{equation} \label{eq:Grams generalization}
	M := \Big\langle \frac{1}{d_np_n} : n \in \mathbb{N} \Big\rangle \quad \text{ and } \quad N := \Big\langle \frac{1}{d_n} : n \in \mathbb{N} \Big\rangle.
\end{equation}
We call $M$ the \emph{Grams-like monoid} of the sequences $(d_n)_{n \ge 1}$ and $(p_n)_{n \ge 1}$ or, simply, a \emph{Grams-like monoid}. The following basic properties of Grams-like monoids generalize those satisfied by Grams' monoid.

\begin{prop} \label{prop:<1/(x_np_n)> is not ACCP}
	Let $M$ and $N$ be the Puiseux monoids in~\eqref{eq:Grams generalization}. Then the following statements hold.
	\begin{enumerate}
		\item $M$ is atomic with $\mathcal{A}(M) = \big\{ \frac{1}{d_np_n} : n \in \mathbb{N} \big\}$.
		\smallskip
		
		\item If $N$ is a valuation monoid, then $M$ does not satisfy the ACCP.
		
	\end{enumerate}
\end{prop}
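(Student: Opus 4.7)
The plan is to separately handle (1) via a $p$-adic valuation argument characterizing the atoms and (2) via a concrete strictly ascending chain of principal ideals inherited from the submonoid $N$.

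For (1), I would first argue that each $a_n := 1/(d_n p_n)$ is an atom. Supposing instead that $a_n = b + c$ with $b, c$ nonzero in $M$, collect the generator expansions of $b$ and $c$ into a single expression $\sum_m e_m/(d_m p_m) = a_n$ with $e_m \in \nn_0$ almost all zero. The key input is the hypothesis that $p_m$ is coprime to every $d_k$ and distinct from every $p_k$ with $k \neq m$; this forces $v_{p_m}\!\bigl(e_k/(d_k p_k)\bigr) \ge 0$ whenever $k \neq m$, while $v_{p_m}\!\bigl(e_m/(d_m p_m)\bigr) = v_{p_m}(e_m) - 1$. Applying $v_{p_m}$ to both sides for each $m \neq n$ (where the left-hand side has valuation $0$) and using uniqueness of the strict minimum in an ultrametric sum forces either $e_m = 0$ or $p_m \mid e_m$; applying $v_{p_n}$ (where the left-hand side has valuation $-1$) forces $p_n \nmid e_n$, so $e_n \ge 1$. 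Nonnegativity of all terms then pins down $e_n = 1$ and $e_m = 0$ for $m \neq n$, contradicting that both $b$ and $c$ are nonzero. Since any atom of $M$ necessarily coincides with a single generator (an expression $\sum_m c_m a_m$ with two nonzero coefficients or with some $c_m \ge 2$ decomposes nontrivially in $M$), we conclude $\mathcal{A}(M) = \{a_n : n \in \nn\}$, and $M$ is atomic because it is generated by its atoms.

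For (2), the central observation is that $N \subseteq M$: each generator $1/d_n$ of $N$ equals $p_n \cdot a_n$, hence lies in $M$. Since $(d_n)_{n\ge 1}$ is strictly increasing, the elements $1/d_{n+1} < 1/d_n$ both lie in $N$, and the valuation hypothesis forces them to be comparable under divisibility in $N$. The only possibility consistent with $N \subseteq \qq_{\ge 0}$ is $1/d_{n+1} \mid_N 1/d_n$, so $1/d_n - 1/d_{n+1} \in N \subseteq M$. This yields $1/d_n + M \subseteq 1/d_{n+1} + M$ for every $n \in \nn$. The inclusion is strict because the reverse would require the negative quantity $1/d_{n+1} - 1/d_n$ to belong to $M$. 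Hence $(1/d_n + M)_{n \ge 1}$ is an ascending chain of principal ideals of $M$ that does not stabilize, and $M$ fails the ACCP.

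The main obstacle will be keeping the valuation bookkeeping in (1) transparent, since two passes of the uniqueness-of-minimum principle are needed: one with $m \neq n$ to kill the off-diagonal primes, and one with $m = n$ to pin down the coefficient $e_n$. Statement (2) is then essentially a one-line consequence of the inclusion $N \subseteq M$ together with the fact that the strictly decreasing sequence $(1/d_n)_{n\ge 1}$ sits inside a totally ordered divisibility structure.
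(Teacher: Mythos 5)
Your proposal is correct and follows essentially the same route as the paper: part (1) is the paper's $p_n$-adic valuation argument (each $\frac{1}{d_np_n}$ is the unique generator with negative $p_n$-adic valuation), written out in more detail, and part (2) uses the identical non-stabilizing chain $\big(\frac{1}{d_n}+M\big)_{n\ge 1}$ obtained from $N\subseteq M$ and the valuation hypothesis on $N$. The only cosmetic remark is that in (1) the second valuation pass already forces $e_n=1$ and kills all other coefficients by nonnegativity, so the first pass over $m\neq n$ is not actually needed.
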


\begin{proof}
    (1) Observe that, for every $n \in \mathbb{N}$, the element $\frac{1}{d_n p_n}$ is the only defining generator of $M$ with negative $p_n$-adic valuation. This implies that the set of atoms of the monoid $M$ is 
    $
    \mathcal{A}(M) = \{ \frac{1}{d_n p_n} : n \in \mathbb{N} \},
    $
    and therefore $M$ is atomic.
    
	\smallskip

	(2) Since $N$ is a valuation Puiseux monoid, for each $n \in \nn$, the fact that $d_n < d_{n+1}$ guarantees that $\frac{1}{d_{n+1}} \mid_N \frac{1}{d_n}$, whence $\frac{1}{d_{n+1}} \mid_M \frac{1}{d_n}$. Therefore $\big(\frac1{d_n} + M\big)_{n \ge 1}$
    is an ascending chain of principal ideals of $M$. Moreover, for each $n \in \nn$, the inequality $\frac{1}{d_{n+1}} < \frac{1}{d_n}$ guarantees that $\frac{1}{d_{n+1}} \notin \frac{1}{d_n} + M$, which implies that $\frac{1}{d_n} + M$ is strictly contained in $\frac{1}{d_{n+1}} + M$. As a consequence, one obtains that the ascending chain $\big(\frac1{d_n} + M\big)_{n \ge 1}$ of principal ideals of $M$ does not stabilize. Hence $M$ does not satisfy the ACCP, as desired.
	
\end{proof}

In order to study aspects concerning to the divisibility of Grams-like monoids, the existence of certain canonical sum decomposition plays a crucial role.

\begin{lemma} \label{lem:rep of elements of M}
	Let $M$ and $N$ be the Puiseux monoids in~\eqref{eq:Grams generalization}. Then each $q \in M$ can be uniquely written as follows:
    \begin{equation} \label{eq:canonical decomp of generalized GM}
        q = c_0(q) + \sum_{n \in \nn} c_n(q)\frac1{d_np_n},
    \end{equation}
    where $c_0(q) \in N$ and $(c_n(q))_{\ge 1}$ is a sequence that eventually stabilizes to $0$ such that $c_n(q) \in \ldb 0, p_n-1\rdb$ for every $n \in \nn$.
\end{lemma}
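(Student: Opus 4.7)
The plan is to prove existence by using the defining generating set together with the division algorithm, and then prove uniqueness by a $p_k$-adic valuation argument that exploits the hypothesis that the primes $p_n$ are pairwise distinct and avoid all the $d_m$.

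For existence, I would start from the fact that $M = \big\langle \frac{1}{d_n p_n} : n \in \nn \big\rangle$, so any $q \in M$ admits (at least one) expansion $q = \sum_{n \in \nn} a_n \frac{1}{d_n p_n}$ with $a_n \in \nn_0$ and all but finitely many $a_n$ equal to $0$. For each $n$, I would apply the division algorithm inside $\zz$ to write $a_n = p_n k_n + c_n(q)$ with $k_n \in \nn_0$ and $c_n(q) \in \ldb 0, p_n - 1 \rdb$, which splits each generator contribution as
\[
    \frac{a_n}{d_n p_n} = \frac{k_n}{d_n} + \frac{c_n(q)}{d_n p_n}.
\]
Summing over $n$ and setting $c_0(q) := \sum_n \frac{k_n}{d_n}$ (a finite sum of elements of $N$, hence a member of $N$) produces the claimed decomposition.

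For uniqueness, suppose that $q$ has two decompositions as in \eqref{eq:canonical decomp of generalized GM} with coefficients $(c_0, (c_n)_{n \ge 1})$ and $(c_0', (c_n')_{n \ge 1})$. Moving everything into $\gp(M) \subseteq \qq$ and isolating the $k$-th generator for an arbitrary but fixed $k \in \nn$ gives
\[
    \frac{c_k' - c_k}{d_k p_k} \; = \; (c_0 - c_0') \; - \sum_{n \neq k} \frac{c_n' - c_n}{d_n p_n}.
\]
The main idea is to compare the $p_k$-adic valuations of both sides. On the right, every term lies in the subgroup $\gp(N) + \big\langle \frac{1}{d_n p_n} : n \neq k \big\rangle_\zz$ of $\qq$, and by the standing hypothesis $p_k \nmid d_n$ for all $n$ together with the fact that $p_k \neq p_n$ for $n \neq k$, each summand has denominator coprime to $p_k$ when written in lowest terms, so its $p_k$-adic valuation is nonnegative. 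On the left, if $c_k \neq c_k'$, then $0 < |c_k' - c_k| \le p_k - 1$ forces $\gcd(c_k' - c_k, p_k) = 1$, and $p_k \nmid d_k$ forces the $p_k$-adic valuation to equal $-1$. This is a contradiction, so $c_k = c_k'$; since $k$ was arbitrary, all the sequence coefficients agree and then $c_0 = c_0'$ follows at once.

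The main obstacle, such as it is, is bookkeeping in the uniqueness step: one must verify cleanly that the $p_k$-adic valuation is well-defined and nonnegative on the entire right-hand side, which is exactly where the two hypotheses on $(p_n)_{n \ge 1}$ (pairwise distinct primes, none dividing any $d_m$) are used in an essential way. Once those conditions are invoked, the contradiction in the $p_k$-adic valuation is immediate, and the proof is complete.
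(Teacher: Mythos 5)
Your proposal is correct and follows essentially the same route as the paper's proof: existence via an expansion over the generating set followed by the division algorithm modulo $p_n$ (absorbing the quotients into an element of $N$), and uniqueness by applying the $p_k$-adic valuation to the difference of two decompositions, using that $p_k \nmid d_m$ for all $m$ and that the $p_n$ are pairwise distinct. No gaps.
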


\begin{proof}
    Fix $q \in M$. If $q = 0$, then it is clear that $q$ has a unique sum decomposition as in~\eqref{eq:canonical decomp of generalized GM}, namely, the sum decomposition with $c_0(q) = 0$ and the sequence $(c_0(q))_{n \ge 1}$ all whose terms are zero. Thus, we assume that $q \neq 0$. 
    
    We first argue the existence of a sum decomposition for $q$ as that specified in~\eqref{eq:canonical decomp of generalized GM}. Since the monoid $M$ is atomic, we can take a sequence $(b_n)_{n \ge 1}$ of nonnegative integers that eventually stabilizes to $0$ such that the equality $q = \sum_{n \in \nn} b_n\frac{1}{d_np_n}$ holds. For each $n \in \nn$, we can set $c_n(q) := b_n - \lfloor \frac{b_n}{p_n} \rfloor p_n$, and observe that $c_n(q) \in \ldb 0, p_n-1 \rdb$ because it is the remainder of $b_n$ modulo $p_n$. Now we see that the identity
    \[
        q = \sum_{n \in \nn} \bigg\lfloor\frac{b_n}{p_n}\bigg\rfloor \frac{1}{d_n} + \sum_{n \in \nn} c_n(q)\frac1{d_np_n}
    \]
    yields a sum decomposition of $q$ as that in~\eqref{eq:canonical decomp of generalized GM}: indeed, just set $c_0(q) := \sum_{n \in \nn} \lfloor\frac{b_n}{p_n}\rfloor \frac{1}{d_n}$, which is a finite sum that belongs to $N$ (because the sequence $(b_n)_{n \ge 1}$ has only finitely many nonzero terms).
    
    We show that the sum decomposition of $q$ given in~\eqref{eq:canonical decomp of generalized GM} must be unique. Towards this end, suppose that
    \[
        q = c'_0(q) + \sum_{n \in \nn} c'_n(q)\frac1{d_np_n},
    \]
    is also a sum decomposition of $q$ as in~\eqref{eq:canonical decomp of generalized GM}, that is, $c'_0(q) \in N$ and $(c'_n(q))_{\ge 1}$ is a sequence that eventually stabilizes to~$0$ with $c'_n(q) \in \ldb 0, p_n-1\rdb$ for every $n \in \nn$. As both sum decompositions equal $q$, the following equality holds:
    \begin{equation} \label{eq:equality existence}
         (c'_0(q) - c_0(q)) = \sum_{n \in \nn} (c_n(q) - c'_n(q)) \frac1{d_n p_n}.
    \end{equation}
    For each $n \in \nn$, note that the left-hand side of~\eqref{eq:equality existence} has nonnegative $p_n$-adic valuation because $p_n \nmid d_m$ for any $m \in \nn$, so we can apply the $p_n$-adic valuation map on both sides of~\eqref{eq:equality existence} to obtain that $c_n(q) = c'_n(q)$. This implies that $c_0(q) = c'_0(q)$. Hence the two sum decompositions of $q$ are the same, completing the proof of our claim.
\end{proof}

The sum decompositions described in Lemma~\ref{lem:rep of elements of M} are crucial in the rest of this section as we study Grams-like monoids and their maximal common divisors. Thus, we introduce the following terminology.

\begin{definition}
    Let $M$ be the Grams-like monoid in~\eqref{eq:Grams generalization}. For each $q \in M$, we call
    \begin{equation} \label{eq:canonical decomp of generalized GM }
        c_0(q) + \sum_{n \in \nn} c_n(q)\frac1{d_np_n}
    \end{equation}
    the \emph{canonical sum decomposition} of $q$ provided that the element $c_0(q)$ and the sequence $(c_n(q))_{n \ge 1}$ satisfy the conditions specified in Lemma~\ref{lem:rep of elements of M}.
\end{definition}

Let us take a look at some immediate consequences of the uniqueness of the canonical sum decomposition.

\begin{cor} \label{cor:e_1|e_2}
	Let $M$ and $N$ be the Puiseux monoids in~\eqref{eq:Grams generalization}. For any $q_1, q_2 \in M$ such that $q_1 \mid_M q_2$, let
     \[
        q_1 = c_0(q_1) + \sum_{n \in \nn} c_n(q_1)\frac1{d_np_n} \quad \text{and} \quad q_2 = c_0(q_2) + \sum_{n \in \nn} c_n(q_2)\frac1{d_np_n}
     \]
    be the canonical sum decompositions of $q_1$ and $q_2$ described in~\eqref{eq:canonical decomp of generalized GM }, respectively. Then the following statements hold.
	\begin{enumerate}
		\item $c_0(q_1) \mid_N c_0(q_2)$.
        \smallskip
        
		\item $c_n(q_2) = (c_n(q_1) + c_n(q_2 - q_1)) \pmod{p_n}$ for every $n \in \mathbb{N}$.
        \smallskip
        
		\item If $c_0(q_1) = c_0(q_2)$, then $c_0(q_2 - q_1) = 0$.
        \smallskip
        
		\item If $c_0(q_1) = c_0(q_2)$, then $c_n(q_1) + c_n(q_2 - q_1) < p_n$ for every $n \in \mathbb{N}$.
        \smallskip
        
		\item If $c_0(q_1) = c_0(q_2)$, then $c_n(q_1) \le c_n(q_2)$ for every $n \in \mathbb{N}$.
	\end{enumerate}
\end{cor}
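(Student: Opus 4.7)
The plan is to set $r := q_2 - q_1 \in M$ and work with the canonical sum decomposition of $r$, namely $r = c_0(r) + \sum_{n \in \nn} c_n(r) \frac{1}{d_n p_n}$, and then expand $q_2 = q_1 + r$ term by term. Since for each $n \in \nn$ both $c_n(q_1)$ and $c_n(r)$ lie in $\ldb 0, p_n-1 \rdb$, their sum lies in $\ldb 0, 2p_n - 2 \rdb$, and so we can uniquely write $c_n(q_1) + c_n(r) = \varepsilon_n p_n + t_n$ with $\varepsilon_n \in \{0,1\}$ and $t_n \in \ldb 0, p_n - 1 \rdb$. Rewriting the sum
\[
    \sum_{n \in \nn} (c_n(q_1) + c_n(r)) \frac{1}{d_n p_n} = \sum_{n \in \nn} \varepsilon_n \frac{1}{d_n} + \sum_{n \in \nn} t_n \frac{1}{d_n p_n},
\]
we obtain a decomposition $q_2 = \big( c_0(q_1) + c_0(r) + \sum_{n} \varepsilon_n \frac{1}{d_n} \big) + \sum_n t_n \frac{1}{d_n p_n}$ which satisfies the conditions of Lemma~\ref{lem:rep of elements of M} (only finitely many $c_n(q_1), c_n(r)$ are nonzero, so the sum defining the $N$-part is finite).

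From here the uniqueness part of Lemma~\ref{lem:rep of elements of M} does essentially all the remaining work. The identification $c_n(q_2) = t_n$ immediately yields (2), since $t_n \equiv c_n(q_1) + c_n(r) \pmod{p_n}$. Similarly, identifying the $N$-parts gives
\[
    c_0(q_2) - c_0(q_1) = c_0(r) + \sum_{n \in \nn} \varepsilon_n \frac{1}{d_n} \in N,
\]
which is exactly (1). For (3), assuming $c_0(q_1) = c_0(q_2)$ forces $c_0(r) + \sum_n \varepsilon_n \frac{1}{d_n} = 0$, and as this is a sum of nonnegative rationals, each summand must vanish; in particular $c_0(r) = 0$ and every $\varepsilon_n = 0$. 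Statement (4) then follows because $\varepsilon_n = 0$ means exactly $c_n(q_1) + c_n(r) < p_n$, and (5) is immediate from $c_n(q_2) = t_n = c_n(q_1) + c_n(r) \ge c_n(q_1)$.

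The only step that requires any genuine care is the bookkeeping of the carries $\varepsilon_n$ when converting $\sum_n (c_n(q_1) + c_n(r)) \frac{1}{d_n p_n}$ into canonical form; once this carry analysis is set up, each of (1)--(5) reduces to a one-line consequence of uniqueness of the canonical decomposition (Lemma~\ref{lem:rep of elements of M}) together with the nonnegativity of elements of $N \subseteq \qq_{\ge 0}$. I do not anticipate a substantive obstacle beyond this.
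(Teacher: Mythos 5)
Your proposal is correct and follows essentially the same route as the paper: write $q_2 = q_1 + (q_2 - q_1)$, add the canonical decompositions with a carry analysis (your $\varepsilon_n$ is precisely the paper's indicator of $c_n(q_1)+c_n(q_2-q_1) \ge p_n$), and read off all five parts from the uniqueness statement in Lemma~\ref{lem:rep of elements of M} together with nonnegativity. No gaps.
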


\begin{proof}
	Write $q_2 = q_1 + q_3$ for some $q_3 \in M$,  let $r_n$ be the remainder of $c_n(q_1) + c_n(q_3)$ modulo $p_n$ for every $n \in \nn$. Observe that
    \[
        q_2 = \bigg( c_0(q_1) + c_0(q_3) + \sum_{n \in \mathbb{N} \, : \, c_n(q_1) + c_n(q_3) \ge p_n} \frac{1}{d_n}\bigg) + \sum_{n \in \nn} r_n \frac1{d_n p_n}
    \]
    is the canonical sum decomposition of $q_2$ in $M$, and so it follows from the uniqueness of such a canonical sum decomposition that
    \[
        c_0(q_2) = c_0(q_1) + c_0(q_3) + \sum_{n \in \mathbb{N} \, : \, c_n(q_1) + c_n(q_3) \ge p_n} \frac1{d_n}.
    \]
    All parts of the corollary are now immediate, with part~(5) being a direct consequence of part~(4).
\end{proof}

With notation as in~\eqref{eq:Grams generalization}, we are in a position to prove that $M$ is an MCD monoid provided that $N$ is a valuation monoid.

\begin{theorem} \label{thm:when Grams-like monoids are MCD}
	Let $M$ and $N$ be the Puiseux monoids in~\eqref{eq:Grams generalization}. If $N$ is a valuation monoid, then $M$ is an MCD monoid.
\end{theorem}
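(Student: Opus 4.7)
The plan is to construct an MCD explicitly, exploiting the canonical sum decompositions from Lemma~\ref{lem:rep of elements of M} and the fact that the valuation hypothesis gives a total order on $N$. Fix a nonempty finite subset $\{q_1, \ldots, q_k\}$ of $M$. Since $N$ is a valuation Puiseux monoid, its divisibility relation is a total order, and it coincides with the usual order on $\qq_{\ge 0}$. Consequently, the finite subset $\{c_0(q_i) : 1 \le i \le k\}$ of $N$ has a divisibility-minimum $c^*$ that is attained by some $q_{i_0}$. Set $A := \{i : c_0(q_i) = c^*\}$ and let $C^*$ be the collection of common divisors $d \in M$ of $\{q_1, \ldots, q_k\}$ whose canonical $c_0$-part equals $c^*$.

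First I would verify that $C^*$ is nonempty and finite. For nonemptiness, the element $c^* \in N \subseteq M$ itself lies in $C^*$: for $i \in A$, the difference $q_i - c^* = \sum_n c_n(q_i) \frac{1}{d_n p_n}$ clearly lies in $M$, and for $i \notin A$, it equals $(c_0(q_i) - c^*) + \sum_n c_n(q_i) \frac{1}{d_n p_n}$, which lies in $M$ because $c^* \mid_N c_0(q_i)$ places the first summand in $N \subseteq M$. For finiteness, apply Corollary~\ref{cor:e_1|e_2}(5) to any $d \in C^*$ and any $q_i$ with $i \in A$: both $c_0$-parts equal $c^*$, so $c_n(d) \le c_n(q_i)$ for every $n \in \nn$. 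Minimizing over $i \in A$ yields $c_n(d) \le c_n^{(A)} := \min_{i \in A} c_n(q_i)$, and since each $q_i$ has finite canonical support, $c_n^{(A)} = 0$ for all but finitely many $n$, so $C^*$ embeds into a finite product of discrete intervals.

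Next I would take $d^*$ to be any maximal element of the finite poset $(C^*, \mid_M)$ and prove that it is an MCD of $\{q_1, \ldots, q_k\}$. Let $e \in M$ be any common divisor of $\{q_i - d^* : 1 \le i \le k\}$; then $d^* + e \in M$ is a common divisor of $\{q_1, \ldots, q_k\}$ divisible by $d^*$. Applying Corollary~\ref{cor:e_1|e_2}(1) to each relation $d^* + e \mid_M q_i$ yields $c_0(d^* + e) \mid_N c_0(q_i)$, which in the valuation monoid $N$ forces $c_0(d^* + e) \le c^*$. Applying Corollary~\ref{cor:e_1|e_2}(1) to $d^* \mid_M (d^* + e)$ gives $c^* = c_0(d^*) \mid_N c_0(d^* + e)$, so $c_0(d^* + e) \ge c^*$. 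Hence $c_0(d^* + e) = c^*$ and $d^* + e \in C^*$; maximality of $d^*$ then forces $e = 0 \in \uu(M)$, showing that $d^*$ is an MCD.

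The main obstacle is guaranteeing that $C^*$ is finite, and this is precisely where the valuation hypothesis is indispensable: it ensures that the divisibility minimum $c^*$ is actually attained, so $A$ is nonempty and Corollary~\ref{cor:e_1|e_2}(5) produces the uniform bounds $c_n(d) \le c_n^{(A)}$. Without the valuation property, one could only bound $c_0(d)$ by a common divisor of all $c_0(q_i)$, which need not constrain the $c_n(d)$ at all, and $C^*$ might fail to be finite. The remaining steps---verifying $c^* \in C^*$ and running the maximality argument---are essentially bookkeeping with the canonical decomposition and the total order on $N$.
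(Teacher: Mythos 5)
Your proof is correct. It shares the paper's overall skeleton---use the canonical sum decomposition of Lemma~\ref{lem:rep of elements of M}, exploit the valuation hypothesis to control the $N$-parts, and then extract a maximal element from a finite set of candidate common divisors---but the two key technical steps are carried out quite differently. The paper normalizes so that $\min\{c_0(s) : s \in S\} = 0$, takes the largest common divisor of $S$ lying in the finitely generated submonoid $F = \langle \frac{1}{d_i p_i} : i \in \ldb 1,k \rdb\rangle$, and then must prove a separate Claim (via a $p_\ell$-adic valuation computation) that no atom of index greater than $k$ can divide the distinguished element $q$ with $c_0(q) = 0$; this Claim is what rules out common divisors escaping $F$. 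You avoid that Claim entirely: by defining $C^*$ to be \emph{all} common divisors whose $c_0$-part equals the divisibility-minimum $c^*$, you get finiteness directly from part~(5) of Corollary~\ref{cor:e_1|e_2} (which bounds every coefficient $c_n(d)$ by $c_n(q_{i_0})$ for $i_0 \in A$), and the verification that a maximal element of $(C^*, \mid_M)$ is an MCD reduces to the two-line observation that any common divisor of $S$ divisible by $d^*$ must again have $c_0$-part $c^*$, hence lies in $C^*$. The trade-off is that the paper's argument identifies a concrete witness ($c_0$ plus the largest element of $F$ dividing $S$) and isolates the atom-level obstruction, whereas yours is shorter and leans more heavily on Corollary~\ref{cor:e_1|e_2}(5); both uses of the valuation hypothesis (to make the divisibility-minimum of the $c_0$-parts attained) are the same.
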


\begin{proof}
	Assume that $N$ is a valuation monoid. Let $S$ be a nonempty finite subset of~$M$, and let us prove that $S$ has a maximal common divisor in $M$. Since $N$ is a valuation Puiseux monoid, after replacing $S$ by $\{s-c_0 : s \in S\}$, where $c_0 := \min \{c_0(s) : s \in S\}$, we can assume that
    \begin{equation} \label{eq:aux min of a set}
      \min \{c_0(s) : s \in S\} = 0.
    \end{equation}
    Now take the largest possible index $k \in \nn$ such that there exists an element $s \in S$ with $c_k(s) \neq 0$. Consider the finitely generated submonoid
    \[
        F := \Big\langle \frac1{d_ip_i} : i \in \ldb 1,k \rdb \Big\rangle
    \]
    of $M$, and let $m$ be the maximum element of $F$ that is a common divisor of $S$ in $M$ (observe that such a maximum element~$m$ must exist because $F$ is finitely generated and so for any $r \in \rr_{\ge 0}$, the monoid $F$ has only finitely many elements contained in the real closed interval $[0,r]$). After replacing $S$ by the set $\{s-m : s \in S\}$ if needed, one can further assume that $0$ is the only element of $F$ that is a common divisor of $S$ in $M$. 
    
    In light of~\eqref{eq:aux min of a set}, we can take $q \in S$ such that $c_0(q) = 0$. If $p_n \mid \mathsf{d}(q)$ for some $n \in \nn$, then $n \in \ldb 1,k \rdb$: indeed, if $p_n \mid \mathsf{d}(q)$, then the coefficient $c_n(q)$ in the canonical sum decomposition of $q$ in $M$ is nonzero (because $p_n \nmid d_m$ for any $m \in \nn$), and so the fact that $n \in \ldb 1,k \rdb$ follows from the maximality of $k$ as $q \in S$. We proceed to establish the following claim as it is needed to complete the proof.
    \smallskip
    
    \noindent \textsc{Claim.} If $a \mid_M q$ for some $a \in \mathcal{A}(M)$, then $a \in \big\{\frac{1}{d_i p_i} : i \in \ldb 1,k \rdb \big\}$.
    \smallskip
    
    \noindent \textsc{Proof of Claim.} Suppose, by way of contradiction, that there exists an index $\ell \in \nn$ with $\ell > k$ such that $\frac{1}{d_\ell p_\ell} \mid_M q$. Now let
    \begin{equation}\label{eq:canonical sum dec of q for a claim}
        q = c_0(q) + \sum_{n=1}^k c_n(q)\frac1{d_n p_n}
    \end{equation}
    be the canonical sum decomposition of $q$ in $M$. Since $\ell > k$, it follows from the argument given in the paragraph preceding the statement of the claim that $p_\ell \nmid \mathsf{d}(q)$, and so $p_\ell \mid \mathsf{d}\big(q-\frac1{d_\ell p_\ell}\big)$, which in turn implies that
    \[
        c_\ell := c_\ell\big(q-\frac1{d_\ell p_\ell}\big) \neq 0
    \]
    in the canonical sum decomposition of $q-\frac1{d_\ell p_\ell}$. In addition, as $\ell$ is the largest positive integer such that $p_\ell \mid \mathsf{d}\big(q - \frac1{d_\ell p_\ell}\big)$, we obtain that $c_\ell$ is the largest nonzero coefficient in the canonical sum decomposition of $q - \frac1{d_\ell p_\ell}$, and so we can rewrite the canonical sum decomposition of $q - \frac1{d_\ell p_\ell}$ as follows:
    \begin{equation} \label{eq:another sum for q}
        q = \frac{1}{d_\ell p_\ell} + c_0\Big(q - \frac1{d_\ell p_\ell}\Big) + \Big(\sum_{n=1}^{\ell-1} c_n\Big(q - \frac1{d_\ell p_\ell}\Big) \frac1{d_n p_n}\Big) + \frac{c_\ell}{d_\ell p_\ell}.
    \end{equation}
    Now we can apply the $p_\ell$-adic valuation map on both sides of~\eqref{eq:another sum for q}, and use the fact that $p_\ell \nmid \mathsf{d}(q)$ to see that $p_\ell = c_\ell + 1$, in which case, \eqref{eq:another sum for q} would be a canonical sum decomposition of $q$ different from that in~\eqref{eq:canonical sum dec of q for a claim}, contradicting the uniqueness of the canonical sum decomposition of $q$ in $M$. Hence the claim is established.
    \smallskip
    
    We are in a position to finish the proof. First observe that the existence of a nonzero common divisor of $S$ in $M$ is equivalent to the existence of a common divisor $a \in \mathcal{A}(M)$ of $S$ in $M$, in which case, the divisibility relation $a \mid_M q$ would imply that $a \in F$ (as a consequence of our established claim), which cannot be possible because $0$ is the only element of $F$ that is a common divisor of $S$ in $M$.   
\end{proof}

Next, we want to characterize when a Grams-like monoid is MCD-finite. First, we need the following lemma on common divisors.

\begin{lemma} \label{lem:g_k(mcd) is 0 if g_k(s) is 0 for all s in S}
	Let $M$ be the Puiseux monoid in~\eqref{eq:Grams generalization}, and let $S$ be a nonempty subset of~$M$. If $d$ is a common divisor of $S$ in $M$ and $n$ is a positive integer such that $c_n(s) = 0$ for all $s \in S$ and $c_n(d) > 0$, then $d + \frac{p_n-c_n(d)}{d_n p_n}$ is also a common divisor of $S$ in $M$.
\end{lemma}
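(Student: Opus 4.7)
The plan is to fix an arbitrary $s \in S$, write $s = d + t_s$ with $t_s \in M$ (using that $d \mid_M s$), and then show that $t_s - \frac{p_n - c_n(d)}{d_n p_n} \in M$; this will immediately give $d + \frac{p_n - c_n(d)}{d_n p_n} \mid_M s$, as desired.

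The first step is to pin down the coefficient $c_n(t_s)$ in the canonical sum decomposition of $t_s$. Applying Corollary~\ref{cor:e_1|e_2}(2) to the divisibility $d \mid_M s$, we obtain the congruence
\[
    c_n(s) \equiv c_n(d) + c_n(t_s) \pmod{p_n}.
\]
By hypothesis $c_n(s) = 0$, so $c_n(d) + c_n(t_s) \equiv 0 \pmod{p_n}$. Since $c_n(d) \in \ldb 1, p_n-1 \rdb$ (by the hypothesis $c_n(d) > 0$ together with the bound built into the canonical sum decomposition) and $c_n(t_s) \in \ldb 0, p_n-1 \rdb$, the sum $c_n(d) + c_n(t_s)$ lies in $\ldb 1, 2p_n - 2 \rdb$, whose only multiple of $p_n$ is $p_n$ itself. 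Therefore $c_n(t_s) = p_n - c_n(d)$.

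The second step is purely mechanical: write the canonical sum decomposition
\[
    t_s = c_0(t_s) + \sum_{m \in \nn} c_m(t_s) \frac{1}{d_m p_m},
\]
and observe that subtracting off the $n$-th summand gives
\[
    t_s - \frac{p_n - c_n(d)}{d_n p_n} = c_0(t_s) + \sum_{m \in \nn \setminus \{n\}} c_m(t_s) \frac{1}{d_m p_m},
\]
which is a finite nonnegative integer combination of generators of $M$ together with an element of $N \subseteq M$, and hence lies in $M$. Adding $d + \frac{p_n - c_n(d)}{d_n p_n}$ to this element recovers $s$, so $d + \frac{p_n - c_n(d)}{d_n p_n} \mid_M s$. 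Since $s \in S$ was arbitrary, the conclusion follows.

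There is no real obstacle here: the only subtlety is the pigeonhole argument in the first step that forces $c_n(t_s) = p_n - c_n(d)$ rather than leaving it merely determined modulo $p_n$, and this is where the hypothesis $c_n(d) > 0$ is essential (it is what makes the sum $c_n(d) + c_n(t_s)$ strictly positive so that the unique multiple of $p_n$ in the relevant range is $p_n$ itself).
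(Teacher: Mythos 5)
Your proof is correct and follows essentially the same route as the paper's: both invoke Corollary~\ref{cor:e_1|e_2}(2) to get the congruence, use the bounds $c_n(d) \in \ldb 1, p_n-1 \rdb$ and $c_n(s-d) \in \ldb 0, p_n-1 \rdb$ to force $c_n(s-d) = p_n - c_n(d)$ exactly, and then observe that $\frac{p_n - c_n(d)}{d_n p_n}$ is a summand of the canonical decomposition of $s-d$ and hence divides it in $M$. No issues.
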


\begin{proof}
	Let $d$ be a common divisor of $S$ in $M$, and take $n \in \nn$ such that $c_n(s) = 0$ for all $s \in S$ and $c_n(d) > 0$. To argue that $d + \frac{p_n-c_n(d)}{d_n p_n}$ is a common divisor of $S$ in $M$, fix $s \in S$. By part (2) of Corollary~\ref{cor:e_1|e_2}, we know that $p_n$ divides $(c_n(d) + c_n(s-d)) - c_n(s)$, which implies that $c_n(d) + c_n(s-d) = p_n$ because $c_n(s) = 0$ and $c_n(d) > 0$. Hence the equality $\frac{p_n - c_n(d)}{d_n p_n} = c_n(s-d)\frac1{d_n p_n}$ holds, and so $\frac{p_n - c_n(d)}{d_n p_n}$ is a summand in the canonical sum decomposition of $s-d$ in $M$. As a result, $\frac{p_n - c_n(d)}{d_n p_n} \mid_M s-d$, which implies that $d + \frac{p_n - c_n(d)}{d_n p_n} \mid_M s$, as desired.
\end{proof}

We conclude this section characterizing the Grams-like monoids that are MCD-finite.

\begin{theorem}
	Let $M$ and $N$ be the Puiseux monoids in~\eqref{eq:Grams generalization}. Then $M$ is MCD-finite if and only if $N$ is MCD-finite.
\end{theorem}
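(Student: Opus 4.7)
For the forward direction ($N$ MCD-finite $\Rightarrow$ $M$ MCD-finite), fix a finite $S \subseteq M$ and set $K := \max\{n \in \nn : c_n(s) > 0 \text{ for some } s \in S\}$. The plan is first to apply Lemma~\ref{lem:g_k(mcd) is 0 if g_k(s) is 0 for all s in S} to show that every MCD $d$ of $S$ in $M$ must satisfy $c_n(d) = 0$ for all $n > K$, so that $d = c_0(d) + \alpha$ with $\alpha$ ranging over the finite set
\[
A := \Big\{ \sum_{n=1}^K e_n \frac{1}{d_np_n} : e_n \in \ldb 0, p_n - 1 \rdb \Big\}.
\]
Next, for each fixed $\alpha \in A$ and each $s \in S$, a direct computation of canonical sum decompositions shows that $c_0(d) + \alpha \mid_M s$ if and only if $c_0(d) \mid_N c_0(s) - E_\alpha(s)$, where $E_\alpha(s) := \sum_{n \le K,\, c_n(\alpha) > c_n(s)} \frac{1}{d_n}$ (and no such $c_0(d)$ exists if some $c_0(s) - E_\alpha(s) \notin N$). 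The final step is to verify that when $d = c_0(d) + \alpha$ is an MCD of $S$ in $M$, the element $c_0(d)$ is an MCD in $N$ of the finite set $T_\alpha := \{c_0(s) - E_\alpha(s) : s \in S\}$: indeed, any non-unit $e' \in N$ with $c_0(d) + e' \mid_N c_0(s) - E_\alpha(s)$ for all $s \in S$ would give $d + e' \mid_M s$ by the equivalence, contradicting the MCD property of $d$. Since $|A|$ is finite and each $T_\alpha$ has finitely many MCDs in $N$ by hypothesis, the number of MCDs of $S$ in $M$ is finite.

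For the reverse direction ($M$ MCD-finite $\Rightarrow$ $N$ MCD-finite), fix a finite $T \subseteq N$ and let $d \in N$ be an MCD of $T$ in $N$; the plan is to show that $d$ is actually an MCD of $T$ in $M$, which exhibits the MCDs of $T$ in $N$ as a subset of the (finite) set of MCDs of $T$ in $M$. Suppose toward contradiction that there is a nonzero $e \in M$ with $d + e$ a common divisor of $T$ in $M$. If some $c_n(d+e) > 0$ for $n \ge 1$, then since $c_n(t) = 0$ for every $t \in T$, I can apply Lemma~\ref{lem:g_k(mcd) is 0 if g_k(s) is 0 for all s in S} iteratively (once for each index $n \ge 1$ with $c_n(d+e) > 0$) to add $f := \sum_{n \ge 1,\, c_n(d+e) > 0} \frac{p_n - c_n(d+e)}{d_np_n}$ and obtain that $d + e + f$ lies in $N$, with $f$ a nonzero element of $M$, and is still a common divisor of $T$ in $M$. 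Then uniqueness of canonical sum decompositions forces $t - (d+e+f) \in N$ for every $t \in T$, so $d + e + f \mid_N t$; applying Corollary~\ref{cor:e_1|e_2}(1) to the divisibility $d \mid_M d+e+f$ yields $e + f = (d+e+f) - d \in N$, giving a nonzero common divisor of $\{t - d : t \in T\}$ in $N$ and contradicting that $d$ is an MCD of $T$ in $N$. The remaining case $c_n(d + e) = 0$ for all $n \ge 1$ (so $d + e \in N$) is handled similarly: Corollary~\ref{cor:e_1|e_2}(1) gives $e \in N$, and $e$ is a nonzero common divisor of $\{t - d\}$ in $N$, again a contradiction. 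Thus $e = 0$ and $d$ is an MCD of $T$ in $M$.

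The main difficulty is that Corollary~\ref{cor:e_1|e_2}(5) only gives the coefficient comparison $c_n(q_1) \le c_n(q_2)$ under the additional hypothesis $c_0(q_1) = c_0(q_2)$, so when the $c_0$-parts of $d$ and $s$ differ, the divisibility $c_0(d) + \alpha \mid_M s$ can fail to imply the naive inequality $c_n(\alpha) \le c_n(s)$. The forward direction must therefore carefully account for the ``borrow'' term $E_\alpha(s)$ from the $c_0$-part whenever $c_n(\alpha) > c_n(s)$, while the reverse direction circumvents this by using Lemma~\ref{lem:g_k(mcd) is 0 if g_k(s) is 0 for all s in S} to clear all fractional coefficients of $d + e$ before comparing $c_0$-parts within $N$.
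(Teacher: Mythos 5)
Your proof is correct, and it follows the same overall blueprint as the paper's: both directions rest on the canonical sum decomposition of Lemma~\ref{lem:rep of elements of M}, on Corollary~\ref{cor:e_1|e_2}, and on Lemma~\ref{lem:g_k(mcd) is 0 if g_k(s) is 0 for all s in S}, and in the direction ($M$ MCD-finite $\Rightarrow N$ MCD-finite) the paper argues exactly as you do that every MCD of a finite $T \subseteq N$ in $N$ remains an MCD of $T$ in $M$ (the paper clears a single fractional coefficient and reads off the contradiction via Corollary~\ref{cor:e_1|e_2}(1), whereas you clear all of them at once and then work entirely inside $N$; both variants go through). The substantive difference is in the direction ($N$ MCD-finite $\Rightarrow M$ MCD-finite): the paper partitions $\text{MCD}_M(S)$ by the integral part, asserting that $c_0(d)$ is always an MCD of $S_0 = \{c_0(s) : s \in S\}$ in $N$ and then bounding the number of fractional parts by $\prod_{i=1}^{k} p_i$, while you partition by the fractional part $\alpha$ and show that $c_0(d)$ is an MCD of the borrow-corrected set $T_\alpha = \{c_0(s) - E_\alpha(s) : s \in S\}$. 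Your version is in fact the more careful one: the maximality of $c_0(d)$ as a common divisor of $S_0$ does not follow immediately from Corollary~\ref{cor:e_1|e_2}(1), since a common divisor $c_0(d) + e$ of $S_0$ in $N$ need not lift to a common divisor $d + e$ of $S$ in $M$ precisely because of the carry terms (and $N$ is not assumed to be a valuation monoid here), whereas your equivalence $c_0(d) + \alpha \mid_M s \iff c_0(d) \mid_N c_0(s) - E_\alpha(s)$ accounts for those carries exactly and makes the transfer of maximality airtight. The only loose end is cosmetic: you should dispose of the degenerate case where $c_n(s) = 0$ for every $s \in S$ and every $n \ge 1$, in which $K$ is the maximum of an empty set (take $K = 0$ and $A = \{0\}$, so $E_0(s) = 0$ and $T_0 = S$).
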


\begin{proof}
    For the direct implication, suppose that $M$ is MCD-finite. Fix a nonempty finite subset $S$ of $N$, and let us argue that $S$ has finitely many MCDs in $N$. As the monoid $M$ is MCD-finite, it suffices to prove that each MCD of $S$ in $N$ is also an MCD of $S$ in $M$. Assume, by way of contradiction, that there exists $d \in N$ that is an MCD of $S$ in $N$ but not an MCD of $S$ in $M$. Then $d$ is a non-maximal common divisor of $S$ in $M$, and so we can take a nonzero $q \in M$ such that $d+q$ is a common divisor of $S$ in $M$. Thus, for each $s \in S$, it follows from part~(1) of Corollary~\ref{cor:e_1|e_2} that $c_0(d+q) \mid_N c_0(s)$, which we can rewrite as $d + c_0(q) \mid_N s$ because $d,s \in N$. Hence $d + c_0(q)$ is a common divisor of $S$ in $N$, and this implies that $c_0(q) = 0$ because $d$ is an MCD of $S$ in $N$.
    
    As $q$ is positive, there must exist an index $n \in \nn$ such that $c_n(q) > 0$, whence $c_n(d+q) = c_n(q) > 0$ as $d \in N$. 
    Now, since $c_n(s) = 0$ for all $s \in S$ and $d+q$ is a common divisor of $S$ in $M$ with $c_n(d+q) > 0$, Lemma~\ref{lem:g_k(mcd) is 0 if g_k(s) is 0 for all s in S} guarantees that $(d+q) + \frac{p_n - c_n(d+q)}{d_n p_n}$ is also a common divisor of $S$ in $M$. Hence using the equality $c_n(d+q) = c_n(q)$, we obtain the following conclusion:
    \begin{equation} \label{eq:divisibility relation in MCD-finite}
        \Big(d + \frac1{d_n} \Big) + q \, \big{|}_M \, s + c_n(q)\frac1{d_n p_n}
    \end{equation}
    for all $s \in S$. Since $d + \frac1{d_n} \in N$ and $c_0(q) = 0$, the equality $c_0\big(d + \frac1{d_n} + q\big) = d + \frac1{d_n}$ holds. Also, the inclusion $S \subseteq N$ implies that $c_0\big(s + c_n(q)\frac1{d_n p_n}\big) = s$. Now we can use part~(1) of Corollary~\ref{cor:e_1|e_2} to infer from~\eqref{eq:divisibility relation in MCD-finite} that $d + \frac1{d_n} \mid_N s$ for all $s \in S$, which means that $d + \frac1{d_n}$ is a common divisor of $S$ in $N$. However, this contradicts that $d$ is an MCD of $S$ in $M$.    
    \smallskip

	For the reverse implication, suppose that $N$ is MCD-finite. Let $S$ be a nonempty finite subset of $M$, and let us prove that the set $\text{MCD}_M(S)$, which consists of all MCDs of $S$ in $M$, is finite. The following nonempty finite subset of $N$ will be helpful:
    \[
        S_0 := \{c_0(s) : s \in S\}.
    \]
    It immediately follows from part (1) of Corollary~\ref{cor:e_1|e_2} that for any $d \in \text{MCD}_M(S)$, the element $c_0(d)$ is an MCD of $S_0$ in $N$. As $N$ is MCD-finite, $S_0$ has only finitely many MCDs in $N$. Thus, it suffices to argue that
    \[
        D := \{d \in \text{MCD}_M(S) : c_0(d) = d_0\}
    \]
    is a finite subset of $M$, assuming that $d_0$ is an arbitrarily taken MCD of $S_0$ in $N$. Since $S$ is finite, we can take $k \in \nn$ to be the maximum positive index such that $c_k(s) > 0$ for some $s \in S$. Let us establish the following claim.
    \smallskip

    \noindent \textsc{Claim.} If $(d, \ell) \in D \times \nn_{> k}$, then $c_\ell(d) = 0$ holds.
    \smallskip

    \noindent \textsc{Proof of Claim.} Assume, by way of contradiction, that there exist $d \in D$ and $\ell \in \nn$ such that $\ell > k$ and $c_\ell(d) > 0$. As $\ell > k$, the maximality of $k$ guarantees that $c_\ell(s) = 0$ for all $s \in S$. In addition, we know that $d$ is a (maximal) common divisor of $S$ in $M$ with $c_\ell(d) > 0$. We have just checked the conditions needed in Lemma~\ref{lem:g_k(mcd) is 0 if g_k(s) is 0 for all s in S} to obtain that $d + \frac{p_\ell - c_\ell(d)}{d_\ell p_\ell}$ is a common divisor of $S$ in $M$. However, this contradicts the maximality of $d$ as a common divisor of $S$ in $M$.
    \smallskip

    In light of the established claim, the canonical sum decomposition of each element $d \in D$ can be written as $d_0 + \sum_{i=1}^k c_i(d) \frac1{d_i p_i}$, and there are only finitely many of such canonical sum decompositions, namely, $\prod_{i=1}^k p_i$. As a consequence, the set $D$ is finite, and so is $\text{MCD}_M(S)$. Hence $M$ is MCD-finite.
\end{proof}

\bigskip
\section{A Class of Weak Reciprocal Puiseux Monoids}
\label{sec:2-prime reciprocal monoids}

Motivated by the Puiseux monoid in Example~\ref{ex:prototypical 2-prime reciprocal}, we introduce a class consisting of atomic Puiseux monoids not satisfying the ACCP, and then we study certain atomic decompositions and the existence of maximal common divisors inside monoids in such class.
\smallskip

Let $(p_n)_{n\geq1}$ be a strictly increasing sequence of primes. We call the Puiseux monoid defined as
\[
    M := \left\langle\frac1{p_np_{n+2}}\colon n \in \mathbb N\right\rangle
\]
the $2$-\emph{prime reciprocal} Puiseux monoid of the sequence $(p_n)_{n \ge 1}$ or, simply, a $2$-\emph{prime reciprocal} Puiseux monoid. We will denote the $2$-prime reciprocal Puiseux monoid of the strictly increasing sequence of primes by $M_\pp$. It is known that $M_\pp$ is strongly atomic (i.e., atomic and $2$-MCD) but does not satisfy the ACCP. As we proceed to show, the same holds for any $2$-prime reciprocal Puiseux monoid.

\begin{prop} \label{prop:2-prime reciprocal is atomic}
    Let $(p_n)_{n \ge 1}$ be a strictly increasing sequence of primes, and let $M$ be the $2$-prime reciprocal Puiseux monoid of the sequence $(p_n)_{n \ge 1}$. Then the following statements hold.
    \begin{enumerate}
        \item $M$ is atomic with $\mathcal{A}(M) = \big\{ \frac{1}{p_n p_{n+2}} : n \in \nn \big\}$.
        \smallskip

        \item $M$ does not satisfy the ACCP.
    \end{enumerate}
\end{prop}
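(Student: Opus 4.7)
The plan for part (1) is to establish the stronger claim that the set of atoms of $M$ equals the set of defining generators, from which atomicity follows immediately since those generators already belong to $M$ by construction. One direction is free: the generators $\{a_n : n \in \nn\}$, where $a_n := \frac{1}{p_n p_{n+2}}$, generate $M$, so any atom must coincide with one of them. For the other direction, I would fix $n$ and suppose for contradiction that $a_n = b + c$ with $b, c \in M \setminus \{0\}$. Because the sequence $(p_k)_{k \ge 1}$ is strictly increasing, the sequence $(a_m)_{m \ge 1}$ is strictly decreasing, and so from $0 < b < a_n$ together with any nonnegative integer representation $b = \sum_m b_m a_m$, we must have $b_m = 0$ for every $m \le n$ (otherwise $b \ge a_m \ge a_n$). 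The same holds for $c$. Combining the two representations yields
\[
\frac{1}{p_n p_{n+2}} \; = \; \sum_{m > n} e_m \frac{1}{p_m p_{m+2}}
\]
with $e_m \in \nn_0$ and almost all zero.

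The decisive step is to apply the $p_n$-adic valuation to both sides of this equation. The left-hand side has valuation $-1$, while on the right-hand side, for each $m > n$ the prime $p_n$ is distinct from both $p_m$ and $p_{m+2}$, so $v_{p_n}\!\big(\frac{1}{p_m p_{m+2}}\big) = 0$ and the whole sum has nonnegative $p_n$-adic valuation. The contradiction shows that $a_n$ is irreducible. I do not anticipate a serious obstacle here; this is a streamlined variant of the valuation argument already appearing in the proof of Proposition~\ref{prop:<1/(x_np_n)> is not ACCP}, the simplification coming from the fact that, once $b$ and $c$ are forced to use only generators with index strictly larger than $n$, the prime $p_n$ no longer appears in any denominator on the right.

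For part (2), I would reuse the arithmetic identity highlighted in Example~\ref{ex:prototypical 2-prime reciprocal}. First observe that $\frac{1}{p_n} = p_{n+2} \cdot a_n$ lies in $M$ for every $n$. The identity
\[
\frac{1}{p_n} \; = \; \frac{1}{p_{n+2}} + (p_{n+2} - p_n)\, \frac{1}{p_n p_{n+2}},
\]
valid because $p_{n+2} > p_n$, then shows that $\frac{1}{p_{n+2}} \mid_M \frac{1}{p_n}$. Hence both
\[
\Big(\tfrac{1}{p_{2n-1}} + M\Big)_{n \ge 1} \quad \text{and} \quad \Big(\tfrac{1}{p_{2n}} + M\Big)_{n \ge 1}
\]
are ascending chains of principal ideals of $M$. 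They are strictly ascending because every element of $\frac{1}{p_n} + M$ is bounded below by $\frac{1}{p_n}$ while $\frac{1}{p_{n+2}} < \frac{1}{p_n}$, so $\frac{1}{p_{n+2}} \notin \frac{1}{p_n} + M$. Either chain witnesses the failure of the ACCP, and no obstacle is expected in this part.
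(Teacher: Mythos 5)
Your proposal is correct and follows essentially the same route as the paper: part (1) combines the same two ingredients (the generators with $p_n$ in their denominator are too large to appear as proper summands, plus a $p_n$-adic valuation count), merely applying the size restriction before the valuation step rather than after, and part (2) is the paper's argument verbatim. No gaps.
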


\begin{proof}
    (1) It suffices to show that $\frac1{p_n p_{n+2}}$ is an atom of $M$ for every $n \in \nn$. Fix $i \in \nn$. If $\frac1{p_ip_{i+2}}$ can be written as the sum of other elements of $A$, then at least one of them must have denominator divisible by $p_i$, which means that at least one element in the sum is either $\frac1{p_{i-2}p_i}$ or $\frac1{p_ip_{i+2}}$, which are both impossible since both numbers are at least $\frac1{p_ip_{i+2}}$. Therefore $\frac1{p_ip_{i+2}}$ is an atom of $M$. 
    \smallskip

    (2) From the fact that $\frac1{p_n} - \frac1{p_{n+2}} = \frac{p_{n+2}-p_n}{p_n p_{n+2}}\in M$ for every $n\in\mathbb N$, we obtain that the ascending chains $\big( \frac1{p_{2n-1}} + M \big)_{n \ge 1}$ and $\big( \frac1{p_{2n}} + M \big)_{n \ge 1}$ of principal ideals of $M$ do not stabilize (this is exactly as in Example~\ref{ex:prototypical 2-prime reciprocal}). Hence the monoid $M$ does not satisfy the ACCP.
\end{proof}

In~\cite[Example~3.9 and Proposition~3.10]{GL23a}, Gotti and Li show that $M_\pp$ is atomic and $2$-MCD. We not only generalize this result to all $2$-prime reciprocal Puiseux monoids, but we also take a step forward proving that all $2$-prime reciprocal Puiseux monoids are MCD monoids. First, we need to argue the existence of a certain sum decomposition inside any $2$-prime reciprocal monoid. 

\begin{lemma} \label{lem:2-prime reciprocal decomposition}
	Let $(p_n)_{n \ge 1}$ be a strictly increasing sequence of primes, and let $M$ be the $2$-prime reciprocal Puiseux monoid induced by $(p_n)_{n \ge 1}$. Then each $q\in M$ can be written as follows:
    \[
        q=c+\sum_{i=-1}^{n-2} c_{i+2}\frac{1}{p_ip_{i+2}},
    \]
    where $n = \max(\{0\} \cup \{i \in \nn :v_{p_i}(q)<0\})$, $p_{-1}=p_0=1$, 
    and $c,c_i \in \mathbb N_0$ for every $i \in \ldb 1, n \rdb$.
\end{lemma}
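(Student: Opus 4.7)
The plan is to start from an arbitrary atomic factorization of $q$ in $M$ (which exists by Proposition~\ref{prop:2-prime reciprocal is atomic}), say $q = \sum_{j \geq 1} a_j \frac{1}{p_j p_{j+2}}$ with $a_j \in \nn_0$ and only finitely many $a_j$ nonzero, and then iteratively rewrite it until every atom of index $j \geq n-1$ has been eliminated. Throughout the reduction, intermediate representations will be allowed to use an integer constant $c \in \nn_0$ together with nonnegative-integer coefficients of the two boundary generators $\frac{1}{p_{-1} p_1} = \frac{1}{p_1}$ and $\frac{1}{p_0 p_2} = \frac{1}{p_2}$.

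The key technical step is the divisibility claim: if $J$ is the largest index with $a_J > 0$ and $J + 2 > n$, then $p_{J+2} \mid a_J$. Indeed, since $J$ is maximal, $a_{J+2} = 0$; the only surviving term of the factorization whose denominator contains $p_{J+2}$ is $\frac{a_J}{p_J p_{J+2}}$, while every other term has nonnegative $p_{J+2}$-adic valuation. The hypothesis $J + 2 > n$ gives $v_{p_{J+2}}(q) \geq 0$, which forces $v_{p_{J+2}}(a_J) \geq 1$, i.e., $p_{J+2} \mid a_J$.

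Given this, write $a_J = p_{J+2}\, k$ with $k \in \nn$ so that $a_J \cdot \frac{1}{p_J p_{J+2}} = k \cdot \frac{1}{p_J}$. If $J \geq 3$, rewrite $\frac{1}{p_J} = p_{J-2} \cdot \frac{1}{p_{J-2} p_J}$ and transfer the contribution into the atom at index $J-2$: set $a_J \leftarrow 0$ and $a_{J-2} \leftarrow a_{J-2} + k\, p_{J-2}$. If $J \in \{1,2\}$, the term $\frac{k}{p_J}$ already has the form $k \cdot \frac{1}{p_{J-2} p_J}$ using $p_{-1} = p_0 = 1$, so absorb $k$ into the coefficient of that boundary generator. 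In each case the largest index with positive coefficient strictly decreases, so after finitely many iterations one arrives at
\[
    q \; = \; c_1 \cdot \frac{1}{p_1} \, + \, c_2 \cdot \frac{1}{p_2} \, + \sum_{j=1}^{n-2} a_j \cdot \frac{1}{p_j p_{j+2}}
\]
for some $c_1, c_2 \in \nn_0$, where the last sum is empty when $n \leq 1$.

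For $n \geq 2$, setting $c = 0$ gives exactly the decomposition required by the statement. For $n \in \{0, 1\}$, one must additionally absorb the boundary contributions into the integer $c$: the same valuation argument, now applied to $v_{p_1}(q), v_{p_2}(q) \geq 0$, forces $p_2 \mid c_2$ when $n \leq 1$ and $p_1 \mid c_1$ when $n = 0$, so the corresponding integer quotients can be added to $c$. The principal obstacle is verifying the divisibility claim $p_{J+2} \mid a_J$, because it depends on a careful uniqueness-of-minimum argument for the $p_{J+2}$-adic valuation of a sum of rationals; once this valuation bookkeeping is in place, the rest is a straightforward terminating rewrite.
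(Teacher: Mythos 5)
Your proof is correct and follows essentially the same route as the paper's: both hinge on the observation that if the top index $J$ of the support satisfies $v_{p_{J+2}}(q)\ge 0$ then $p_{J+2}\mid a_J$, followed by the transfer identity $\frac{a_J}{p_Jp_{J+2}}=\frac{(a_J/p_{J+2})p_{J-2}}{p_{J-2}p_J}$; the paper packages the descent as a minimal-counterexample contradiction while you run it as an explicit terminating rewrite. Your explicit treatment of the boundary cases $n\in\{0,1\}$ is if anything more careful than the paper's, which dismisses $n\le 1$ as obvious.
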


\begin{proof}
	If $n\leq1$, then the result is obvious. Therefore we assume that $n\geq2$. It follows from part~(1) of Proposition~\ref{prop:2-prime reciprocal is atomic} that $M$ is an atomic with $\mathcal{A}(M) = \big\{ \frac{1}{p_np_{n+2}} : n \in \nn \big\}$. Hence we can write $q$ in $M$ as follows:
	\[
		q=c+\sum_{i=-1}^m \frac{c_{i+2}}{p_ip_{i+2}},
	\]
	where $m\in\mathbb N$ and $c, c_1, c_2, \dots, c_{m+2} \in \nn_0$. We further assume that we have taken $m$ as small as it can possibly be, and we proceed to argue that $m \le n-2$. Assume, towards a contradiction, that $m\geq n-1\geq 1$. Then the only term with a denominator that could be divisible by $p_{m+2}$ is $\frac{c_{m+2}}{p_mp_{m+2}}$, so we must have $p_{m+2}\mid c_{m+2}$ since $p_{m+2}$ does not divide the denominator of $a$. This implies we can replace $c_m$ with $c_m+p_{m-2}\frac{c_{m+2}}{p_{m+2}}$, remove $c_{m+2}$, then subtract $1$ from $m$ since $$\frac{c_m+p_{m-2}\frac{c_{m+2}}{p_{m+2}}}{p_{m-2}p_m}=\frac{c_m}{p_{m-2}p_m}+\frac{c_{m+2}}{p_mp_{m+2}}.$$ This reduces the value of $m$, contradicting its minimality. 
\end{proof}

Now we can prove the main result of this section.

\begin{theorem} \label{thm:2-prime reciprocal MCD}
    Any 2-prime reciprocal Puiseux monoid is an MCD monoid.
\end{theorem}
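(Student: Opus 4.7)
The strategy is to reduce the MCD problem for $S$ in $M$ to the analogous problem inside a finitely generated submonoid of $M$, where strong MCD is automatic from the ACCP via Proposition~\ref{prop:ACCP implies strong MCD}. First I would set $N := \max\{n \in \nn : v_{p_n}(s) < 0 \text{ for some } s \in S\}$, taking $N := 0$ if no such $n$ exists, and introduce the finitely generated submonoid
\[
    M_N := \Big\langle \{1\} \cup \Big\{ \tfrac{1}{p_i p_{i+2}} : -1 \le i \le N - 2 \Big\} \Big\rangle
\]
of $M$, with the convention $p_{-1} = p_0 = 1$ from Lemma~\ref{lem:2-prime reciprocal decomposition}. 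That lemma gives $S \subseteq M_N$, and since $M_N$ is a finitely generated Puiseux monoid it is, up to isomorphism, a numerical monoid; it satisfies the ACCP, and so Proposition~\ref{prop:ACCP implies strong MCD} makes it strongly MCD. Let $d^*$ be an MCD of $S$ in $M_N$. The theorem will follow once I show $d^*$ is in fact an MCD of $S$ in $M$.

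I would next establish two auxiliary facts. The first is \emph{divisor-closedness} of $M_N$ inside $M$: if $a, b \in M_N$ and $b - a \in M$, then $b - a \in M_N$. Each generator of $M_N$ has nonnegative $p_i$-adic valuation for every $i > N$ (the denominators involve only $p_j$ with $j \le N$), so $v_{p_i}(a), v_{p_i}(b) \ge 0$ for all $i > N$, whence $v_{p_i}(b - a) \ge 0$; then applying Lemma~\ref{lem:2-prime reciprocal decomposition} to $b - a$ produces a decomposition using only pseudo-atoms $\tfrac{1}{p_i p_{i+2}}$ with $i \le N - 2$, so $b - a \in M_N$. The second and more delicate fact is the \emph{completion property}: every common divisor $d$ of $S$ in $M$ admits a common divisor $d^\circ$ of $S$ in $M$ satisfying $d \mid_M d^\circ$ and $d^\circ \in M_N$.

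To prove completion I would iterate a one-step procedure that strictly decreases $n_d := \max\{i : v_{p_i}(d) < 0\}$ whenever $n_d > N$. Set $n := n_d$; after reducing the top coefficient of Lemma~\ref{lem:2-prime reciprocal decomposition} modulo $p_n$ (by carrying the resulting quotient either to the pseudo-atom $\tfrac{1}{p_{n-4} p_{n-2}}$ when $n \ge 5$, or to the pseudo-atom $\tfrac{1}{p_{n-2}}$ with $n - 2 \in \{1, 2\}$, or to the $\nn_0$ summand when $n \in \{1, 2\}$), I may take the coefficient $c$ of $\tfrac{1}{p_{n-2} p_n}$ to lie in $\ldb 1, p_n - 1 \rdb$. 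For each $s \in S$, the same reduction applied to the decomposition of $s - d$ produces a coefficient $c'_s \in \ldb 1, p_n - 1 \rdb$ at $\tfrac{1}{p_{n-2} p_n}$; comparing $p_n$-adic valuations in the identity $s = d + (s - d)$ together with $v_{p_n}(s) \ge 0$ forces $c + c'_s \equiv 0 \pmod{p_n}$, and the range constraints pin down $c'_s = p_n - c$. Now set $d^{\mathrm{new}} := d + (p_n - c) \tfrac{1}{p_{n-2} p_n}$. The pseudo-atom $(p_n - c) \tfrac{1}{p_{n-2} p_n}$ is precisely the top summand in the decomposition of $s - d$, so deleting it yields $s - d^{\mathrm{new}} \in M$ for every $s \in S$, making $d^{\mathrm{new}}$ a common divisor of $S$ with $d \mid_M d^{\mathrm{new}}$. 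A direct $p_n$-adic computation then gives $v_{p_n}(d^{\mathrm{new}}) \ge 0$, and $v_{p_i}(d^{\mathrm{new}}) = v_{p_i}(d) \ge 0$ holds for $i > n$, so $n_{d^{\mathrm{new}}} < n$. At most $n_d - N$ iterations produce the required $d^\circ \in M_N$.

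To conclude, let $d'$ be any common divisor of $S$ in $M$ with $d^* \mid_M d'$. The completion property yields $d'' \in M_N$ with $d' \mid_M d''$ and $d''$ a common divisor of $S$ in $M$, so $d^* \mid_M d''$. Divisor-closedness then upgrades $d^* \mid_M d''$ to $d^* \mid_{M_N} d''$ and each $s - d'' \in M$ to $s - d'' \in M_N$, making $d''$ a common divisor of $S$ in $M_N$ that is divisible by $d^*$; maximality of $d^*$ in $M_N$ forces $d'' = d^*$, giving $d' \mid_M d^*$. Combined with $d^* \mid_M d'$ and the fact that $M$ is reduced, this yields $d' = d^*$, and so $d^*$ is an MCD of $S$ in $M$. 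The main obstacle is the completion step, where the $p_n$-adic rigidity identity $c'_s = p_n - c$ is what powers the iteration and underlies the whole reduction to the finitely generated piece $M_N$.
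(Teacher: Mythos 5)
Your proof is correct, and it runs on the same engine as the paper's --- Lemma~\ref{lem:2-prime reciprocal decomposition} plus the $p_n$-adic observation that a common divisor whose denominator involves a prime $p_n$ outside the support of $S$ can be enlarged by a multiple of $\frac{1}{p_{n-2}p_n}$ --- but it is packaged genuinely differently. The paper works directly in $M$: it takes the largest common divisor $d$ of $S$ supported on the pseudo-atoms up to a fixed index (which exists by finiteness) and then derives a contradiction from a doubly extremal choice (minimal top prime $p_m$ of a hypothetical further divisor $e$ of $S-d$, then maximal such $e$), adding one copy of $\frac{1}{p_{m-2}p_m}$ at a time. You instead isolate the finitely generated submonoid $M_N$ containing $S$, import an MCD $d^*$ of $S$ from $M_N$ via the ACCP/strong-MCD machinery of Proposition~\ref{prop:ACCP implies strong MCD}, and transfer maximality back to $M$ through two structural lemmas: divisor-closedness of $M_N$ in $M$ (a routine valuation check) and a completion lemma saying every common divisor of $S$ in $M$ divides one lying in $M_N$. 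Your completion step is a sharper, constructive version of the paper's contradiction: the rigidity identity $c'_s = p_n - c$ (which holds simultaneously for all $s \in S$, which is exactly what makes $d^{\mathrm{new}}$ a \emph{common} divisor) clears $p_n$ from the denominator in a single move and bounds the number of iterations, where the paper proceeds one atom at a time and leaves some of the extremal bookkeeping implicit. What your route buys is reusability --- divisor-closedness and completion are statements about the pair $(M_N, M)$ that could serve other divisibility questions about these monoids --- at the cost of more setup; the paper's route is shorter and stays entirely inside $M$.
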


\begin{proof}
    Let $(p_n)_{n\ge 1}$ be a strictly increasing sequence of primes, and let $M$ be the $2$-prime reciprocal Puiseux monoid induced by $(p_n)_{n \ge 1}$. For any $q \in M$, it follows from  Lemma~\ref{lem:2-prime reciprocal decomposition} that if $p_n$ is the largest prime dividing the denominator of $q$, then there exist $b, b_1, b_2, \dots, b_n \in \nn_0$ such that
	\[
		q = b + \sum_{i=-1}^{n-2} \frac{b_{i+2}}{p_ip_{i+2}}.
	\] 
    In particular, if $n \ge 3$, then the only term with denominator divisible by $p_n$ is $\frac{b_n}{p_{n-2}p_n}$, so $b_n$ must be nonzero, implying $q$ must be divisible by $\frac1{p_{n-2}p_n}$.
	
	Let $S = \{s_1, \dots, s_k\}$ be a nonempty subset of $M$. We want to show that $S$ has an MCD in $M$. As $\nn_0 \subset M$, if every element of $S$ is an integer, then $\min S$ is an MCD of $S$ in $M$. Otherwise, let $p_{n-5}$ be the largest prime dividing the denominator of any element of $S$. There exist $n_1, \dots, n_k \in \nn_0$ and sequences $(c_{1,j})_{j \ge 1}, \dots, (c_{k,j})_{j \ge 1}$ of coefficients with terms in $\nn_0$ such that
	\[
		s_i=n_i+\sum_{j=-1}^n\frac{c_{i,j+2}}{p_jp_{j+2}}
    \]
	for every $i \in \ldb 1,k \rdb$. Let $d$ be the largest common divisor of $S$ of the form 
        \[d=d_0+\sum_{i=-1}^n\frac{d_{i+2}}{p_ip_{i+2}},\]
    which exists because there are finitely many possible values for each of $d_1$, $d_2$, \dots, $d_{n+2}$ when $d\leq\min S$. We are done once we establish the following claim.
    \smallskip

    \noindent \textsc{Claim.} $d$ is an MCD of $S$ in $M$.
    \smallskip

    \noindent \textsc{Proof of Claim.} Assume, by way of contradiction, that $d$ is not an MCD of $S$. Given our assumption, there exists a positive $e \in M$ that is a common divisor of $S-d$ in $M$. Let $p_m$ be the largest prime dividing the denominator of $e$. Assume that we have picked $e$ such that $m$ is minimal. Now, fixing this minimal value of $m$, we replace $e$ by the largest possible positive common divisor of $S-d$ in $M$ such that the largest prime dividing its denominator is $p_m$ (which exists since each $m$ only has a finite number of possible $e$). Since $m>n+2$, for any $i \in \ldb 1,k \rdb$, the largest prime dividing the denominator of $s_i-d-e$ is $p_m$, so $s_i-d-e$ is  divisible by $\frac1{p_{m-2}p_m}$, so $e+\frac1{p_{m-2}p_m}$ is also a common divisor of $S-d$. However, this contradicts either the maximality of $e$ or the minimality of $m$, and so the claim is established. This concludes the proof. 
\end{proof}

\bigskip
\section{Atomic Monoids that Are not MCD Monoids}
\label{sec:atomic not strongly atomic}

The atomic monoids we have considered so far are all MCD monoids. In this last section, we provide a new construction of an atomic monoid that is not MCD (not even $2$-MCD). Thus, the monoid resulting from our construction will not satisfy neither the strong MCD property nor the ACCP. In addition, the same monoid will be produced as a maximal-rank submonoid of the additive monoid $\qq^2_{\ge 0}$, and so it will have rank~$2$. Working with submonoids of $\mathbb Q_{\ge 0}^2$ has advantages similar to working with Puiseux monoids, including that every submonoid of $\qq_{\ge 0}^2$ is reduced and torsion-free. In our construction, we will need the projection homomorphisms $\pi_1, \pi_2 \colon \mathbb Q_{\ge 0}^2 \to \mathbb Q_{\ge 0}$, which are defined as follows: for any $(x,y) \in \qq^2_{\ge 0}$,
\begin{align*}
	\pi_1((x,y))&:=x\\
	\pi_2((x,y))&:=y.
\end{align*}

\begin{example}
	Let $p_n$ denote the $n$-th prime number. Consider the submonoid $M$ of $\mathbb Q_{\ge 0}^2$ defined in the following way:
	\[
		M: = \left<\left(\frac{1}{2^np_{2n}},0\right),\left(\frac{1}{2^np_{2n+1}},\frac{1}{p_{2n+1}}\right) : n \in \mathbb N \right>.
	\]
	Under applying $\pi_1$, the resulting generating set is $S :=\big\{ \frac{1}{2^np_{2n}}, \frac{1}{2^np_{2n+1}} : n \in \mathbb N \big\}$. Note that, for each $n \in \nn$, it is impossible to express $\frac{1}{2^np_{2n}}$ as the sum of other elements of $S$ because such a sum cannot have a denominator divisible by $p_{2n}$, while a similar situation happens with $\frac{1}{2^np_{2n+1}}$. Since $\pi_1$ is a monoid homomorphism, this means every element of the defining generating set of $M$ must be an atom. So $M$ is an atomic monoid.
	
	Furthermore, note that $M$ contains the elements $\left(\frac{1}{2^n},0\right)$ and $\left(\frac{1}{2^n},1\right)$ for any positive integer $n$. Thus $M$ contains the elements $(d,0)$ and $(d,1)$ for any positive dyadic rational $d$. Note that $(0,1) \notin M$ because every generator of $M$ is positive under $\pi_1$ while $\pi_1((0,1))=0$. Next, we prove the following claim.
    \smallskip
    
    \noindent \textsc{Claim.} Each common divisor in $M$ of the set $\{(1,0),(1,1)\}$ divides an element of the form $\left(1-\frac{1}{2^n},0\right)$ for some positive integer $n$. 
    \smallskip
	
	\noindent \textsc{Proof of Claim.} Let $x$ be a common divisor of the set $\{ (1,0), (1,1) \}$ in $M$. Then we have $(1,0) = x+a$ and $(1,1) = x+b$ for some $a,b\in M$. Since $\mathbb Q_{\ge 0}^2$ is cancellative, this implies $b=a+(0,1)$. Furthermore, since $\pi_2((1,0))=0$, this means $\pi_2(a)=\pi_2(x)=0$ and therefore $\pi_2(b)=1$.
	
	Consider an atomic decomposition of $a$ and $b$ in $M$. Since $\pi_2(a)=0$, the atomic decomposition of $a$ only uses elements of the form $\big(\frac{1}{2^np_{2n}},0 \big)$ for positive integers $n$. Therefore the denominator of $\pi_1(a)$ cannot be divisible by $p_{2n+1}$ for positive integers $n$. Consequently, the same is true of $\pi_1(b)$. This means that in the atomic decomposition of $b$, an atom of the form $\big( \frac{1}{2^np_{2n+1}},\frac{1}{p_{2n+1}} \big)$ is used a multiple of $p_{2n+1}$ times for any positive integer $n$. However,
	\[
		\pi_2 \bigg( p_{2n+1}\bigg(\frac{1}{2^np_{2n+1}}, \frac{1}{p_{2n+1}} \bigg)\bigg) = 1 = \pi_2(b).
	\]
	Therefore there must be a unique $n_0 \in \nn$ such that $\big(\frac{1}{2^{n_0}p_{2n_0+1}},\frac{1}{p_{2n_0+1}} \big)$ appears in the atomic decomposition of $b$ and it appears exactly $p_{2n_0+1}$ times. 
    Then $\big(\frac{1}{2^{n_0}},1\big)$ divides $b$. Hence $x=(1,1)-b$ divides $\big(1-\frac{1}{2^{n_0}}, 0\big)$. The claim is now established.
    \smallskip

    We are now ready to show that $M$ is not a 2-MCD monoid. Then for any common divisor $d$ of $(1,0)$ and $(1,1)$, we can choose some positive integer $n$ such that $d \mid_M 1-\frac{1}{2^n}$ and $d\neq 1-\frac{1}{2^n}$. If $d$ is of the form $1-\frac{1}{2^k}$ for some positive integer $k$, then we can pick $n=k+1$. Then $1-\frac{1}{2^n}-d\neq 0$, so it is not invertible in $M$. We further have $1-\frac{1}{2^n}$ is a common divisor of $(1,0)$ and $(1,1)$. Therefore $M$ is not a 2-MCD monoid.
\end{example}

\bigskip
\section*{Acknowledgments}

We would like to thank our mentor Felix Gotti for guiding us during our research. We also thank the MIT PRIMES program for making this collaboration possible.

\bigskip
\section*{Conflict of Interest Declaration}

Authors have no conflict of interest to declare.

\bigskip

\end{document}